\newcommand{\dynkinradius}{.1cm}
\newcommand{\dynkinstep}{1.5cm}
\newcommand{\dynkindot}[3]{\fill (\dynkinstep*#1,\dynkinstep*#2) circle (\dynkinradius)node[below]{#3};}
\newcommand{\dynkindott}[3]{\fill (\dynkinstep*#1,\dynkinstep*#2) circle (\dynkinradius)node[right]{#3};}
\newcommand{\dynkinline}[4]{\draw[thin] (\dynkinstep*#1,\dynkinstep*#2) -- (\dynkinstep*#3,\dynkinstep*#4);}
\newcommand{\dynkindots}[4]{\draw[dotted] (\dynkinstep*#1,\dynkinstep*#2) -- (\dynkinstep*#3,\dynkinstep*#4);}
\newenvironment{dynkin}{\begin{tikzpicture}[scale=0.5, decoration={markings,mark=at position 0.7 with {\arrow{>}}}]}
{\end{tikzpicture}}
\newtheorem{prop}{Proposition}
\newtheorem{thm}{Theorem}
\newtheorem{lemma}{Lemma}
\newtheorem{defn}{Definition}
\newtheorem{cor}{Corollary}
\def\C{\mathbb{C}}
\def\R{\mathbb{R}}
\def\P{\mathbb{P}}
\def\lra{\leftrightarrow}
\def\sign{\operatorname{sign}}
\title{A Bruhat Atlas on the Wonderful Compactification of
$PSO(2n)/SO(2n-1)$}
\author{Daoji Huang}
\date{}
\begin{document}
\maketitle
\begin{abstract}
    A stratified manifold
    has a Bruhat atlas on it if it can be covered with open charts such that each chart is stratified-isomorphic to an (opposite) Bruhat cell in a (usually Kac-Moody) flag manifold. In this paper, we construct an anticanonical stratification
     on the wonderful
    compactification of the symmetric space
    $PSO(2n)/SO(2n-1)$ and show that the open charts
    are isomorphic to certain (opposite) 
    Bruhat cells in the type $D_{n+1}$ flag manifold.
\end{abstract}
\section{Introduction and Statements of the Main Result}
\subsection{Wonderful Compactification of a Symmetric Space}
Let $G$ be a connected, semisimple, adjoint, linear algebraic group
over $\C$, and $\theta$ an involution on $G$. Denote by
$K=G^\theta$ its group of fixed points. 
Let $T$ be a $\theta$-stable maximal torus such that 
$T^{-\theta}=\{t\in T:\theta(t)=t^{-1}\}$ is of maximal dimension. Any such
$T$ is called a maximally split torus. Let $A$ be 
the connected component of the identity of $T^{-\theta}$.
Let $\Phi$ be the root system of  $(G,T)$ and $W$ the Weyl group. $\theta$ permutes the
roots of $\Phi$. Let $\Phi_0$ denote the closed subsystem of 
$\Phi$ fixed by $\theta$.
There exists a positive system $\Phi^+$ such that
if $\alpha\in \Phi^+$ and $\theta(\alpha)\in \Phi^+$, then $\alpha\in \Phi_0$.
Let $B$ be the Borel group defined by $\Phi^+$ and $\Delta$ the set of
simple roots. 
For any $J\subset \Delta$, let $P_J$ be the corresponding standard
parabolic subgroup of $G$. Let $P=P_{\Delta\cap \Phi_0}$ and $U$
be the unipotent radical of $P$. 

Let $X^*(T)$ denote the character lattice of $T$ and $X^*(A)$ 
the character lattice of $A$. We have a surjective
 homomorphism $X^*(T)\to X^*(A)$ induced by restriction. Let 
 $\Phi_A$ be the set of nonzero elements
 in the image of $\Phi$ in $X^*(A)$. Then 
 $\Phi_A$ is a root system in $X_{\R}^*(A)=\R\otimes X^*(A)$, whose
 Weyl group $W_A$ can be identified with $N_G(A)/Z_G(A)$.
 The set $\Delta_A$ of nonzero images of $\Delta$ in $X^*(A)$
form the simple roots of $\Phi_A$. Let $C\subset X_{\R}^*(A)$ be the 
fundamental Weyl chamber. Let $T_1=T/(T\cap K)$. The canonical
map $T\to T_1$ induces an isogeny $A\to T_1$, identifying the
character lattice $X^*(T_1)$ with $2X^*(A)\subset X^*(A)$. We identify
$\R\otimes X^*(T_1)$ with $X_{\R}^*(A)$.

\begin{thm}[\cite{DP}]
There exists a pointed $G$-variety $(X,h)$ with the following properties:
\begin{enumerate}[(a)]
\item The orbit map $g\mapsto gh$ $(g\in G)$ induces an isomorphism
$G/K\to Gh$,
\item The orbit map $t\mapsto th$ $(t\in T)$ induces a $T_1$-embedding
which is normal. Its associated fan in $X_{\R}^*(A)$ is the fan of Weyl
chambers of $W_A$.
\item Let $Y$ be the open set associated to the Weyl chamber $C$.
The morphism $(u,x)\mapsto u\cdot x$ defines an isomorphism of 
$U\times Y$ onto an affine open set of $X$.
\end{enumerate}
Furthermore, $X$ is unique up to a unique $G$-isomorphism. 
\end{thm}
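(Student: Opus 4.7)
The plan is to construct $X$ explicitly as the closure of a $G$-orbit in a projective representation and then verify the three properties by a mixture of representation theory and toric geometry. First, I would pick a dominant weight $\lambda$ lying strictly inside the chamber $C$, with $\lambda \in X^*(T_1) = 2X^*(A)$, and form the irreducible $G$-module $V = V_\lambda$ of highest weight $\lambda$. Sphericity of the symmetric pair $(G,K)$ provides a unique (up to scalar) $K$-fixed vector $v \in V$; set $h := [v] \in \P(V)$ and let $X$ be the normalization of the Zariski closure $\overline{G \cdot h}$ inside $\P(V)$. The point $h$ is $K$-fixed by construction.

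For (a), one shows that the full stabilizer of $h$ in $G$ is exactly $K$: regularity of $\lambda$ on $X^*(A)$ prevents any ``extra'' element of $G$ from fixing the $K$-fixed line, and the adjoint hypothesis on $G$ rules out finite central contributions. This yields an isomorphism $G/K \cong G \cdot h$. For (b), decompose $v$ into $T$-weight vectors and track the weights that actually appear: the convex hull of these weights is the convex hull of the $W_A$-orbit of $\lambda$ viewed in $X^*(T_1)$, and since $\lambda$ is interior to $C$, the resulting polytope is simple with normal fan equal to the fan of Weyl chambers of $W_A$. Hence $\overline{T \cdot h}$, after the normalization that has already been performed on $X$, is precisely the projective toric $T_1$-variety associated to this fan.

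For (c), let $Y$ be the affine chart of the toric slice corresponding to the cone $C$, and consider the multiplication map $U \times Y \to X$, $(u,y) \mapsto u \cdot y$. Standard ``big cell'' reasoning shows this is an open immersion onto an affine open of $X$: it is dominant by a dimension count ($\dim U + \dim A = \dim G/K$ once one accounts for the Levi of $P$), its differential at $(1,h)$ is an isomorphism because $\Lie(G) = \Lie(U) \oplus \Lie(P)$ splits the tangent space while $Y$ accounts for the $A$-directions transverse to $G \cdot h$, and injectivity follows from the fact that the $G$-stabilizer of any point of $Y$ meets $U$ trivially. Uniqueness up to a unique $G$-isomorphism is then extracted from (a)--(c) themselves: any two such varieties agree on the dense orbit $Gh$ by (a), agree on the toric slice by the normal $T_1$-embedding description in (b), and the affine chart in (c) shows the resulting birational $G$-equivariant map is an isomorphism on a $G$-covering by open sets.

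The main obstacle is step (b), the toric computation. One must verify that the weight polytope of the $K$-fixed vector $v$ in $V_\lambda$ is exactly $\text{conv}(W_A \cdot \lambda)$ --- no smaller, because $K$-invariance forces many weight components to be nonzero --- and that the identification $X^*(T_1) = 2X^*(A)$ causes the fan to match the Weyl chamber decomposition rather than a refinement or doubling of it. This bookkeeping, together with justifying that the normalization does not spoil properties (a) or (c), is the delicate heart of the argument; everything else is orbit/stabilizer analysis and a routine big-cell computation.
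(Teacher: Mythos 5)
The paper does not prove this theorem; it quotes it verbatim from De Concini and Procesi \cite{DP}, so there is no internal argument for you to be measured against. Judged on its own terms, your sketch does recover the shape of the original [DP] construction: pick a regular dominant $\lambda\in X^*(T_1)=2X^*(A)$, use Cartan--Helgason to obtain the unique $K$-fixed line $h\in\P(V_\lambda)$, and take the orbit closure. Two places where you should be more careful, since they are precisely where [DP] spend their effort. First, [DP] do not normalize: they prove directly that $\overline{G\cdot h}$ is already smooth, with the local structure theorem (c) as the engine. Your ``normalize first, verify later'' route is legal, but you then owe an explicit check that normalization is $G$-equivariant, restricts to an isomorphism over the open orbit $G\cdot h$, and does not change the affine chart of (c) --- i.e.\ that (a) and (c) hold for the normalized variety and not merely for $\overline{G\cdot h}$; this is not automatic, since a priori normalization could modify the boundary. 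Second, the ``routine big-cell computation'' is in fact the crux of the theorem. The differential statement at $(1,h)$ rests on the decomposition $\g=\mathfrak{u}\oplus\mathfrak{z}_{\g}(\mathfrak{a})\oplus\theta(\mathfrak{u})$ and on identifying the normal space to $G\cdot h$ inside $X$ along $Y$ with $\Lie(A)$-directions, while the claim that $U\times Y\to X$ is injective (not just dominant) requires comparing with the $B$-orbit stratification rather than a bare stabilizer computation. Calling that step routine understates it; everything else in your outline is genuinely bookkeeping.
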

$X$ is the wonderful compactification (sometimes written $\overline{G/K}$)
of the affine symmetric variety $G/K$. It is smooth, projective, with finitely many
smooth $G$-stable simple normal crossings divisors.
The closure of every 
$G$ orbit of lower dimension is a
intersection of a set of these divisors.
$G/K$ is $G$-isomorphic to the open orbit. 

\subsection{Bruhat Atlas}
\begin{defn}
Let $M$ be a variety. A \textbf{stratification} on $M$ 
is a family of locally closed subvarieties
$\{M_y^\circ\}_{y\in\mathcal{Y}}$
indexed by a ranked poset $\mathcal{Y}$ 
such that 
$M=\bigsqcup_{y\in \mathcal{Y}}M_y^\circ$ , and
each
$M_y^\circ$ has closure $M_y$ such that
and  $M_y=\bigsqcup_{y'\le y} M_{y'}^\circ$.
\end{defn}
\begin{defn}
Let $M$ be a complex smooth manifold with a stratification $\mathcal{Y}$
whose minimal strata are points. A \textbf{Bruhat atlas} on $(M,\mathcal{Y})$
is the following data:
\begin{enumerate}[(a)]
\item A Kac-Moody group $H$ with Borel subgroup $B_H$.
\item An open cover for $M$ consisting of open sets $U_f$ around the minimal
strata, i.e., $M=\bigcup_{f\in \mathcal{Y}_{\min}}U_f$.  
\item A ranked poset injection $v:\mathcal{Y}^{op}\to W_H$, whose 
image is a union $\bigcup_{f\in\mathcal{Y}_{\min}} [1,v(f)]$ of Bruhat intervals.
\item  For $f\in \mathcal{Y}_{\min}$, a stratified chart isomorphism 
$c_f:U_f \xrightarrow{\sim} X_\circ^{v(f)}$ satisfying
$c_f(U_f\cap M_y)=X_\circ^{v(f)}\cap X_{v(y)}$ for all $y\in \mathcal{Y}$, where $X^{v(f)}_\circ = B_H v(f) B_H/B_H$ and $X_{v(y)}=\overline{B_H^-{v(y)}B_H/B_H}$.
\end{enumerate}
\end{defn}
Although not phrased 
explicitly in the language
of Bruhat atlases,
the first example of Bruhat atlas was constructed by Snider  in the case of $Gr(k,n)$ 
with the positroid stratification \cite{Snider}. Knutson, Woo, and Yong
built one on $G/B$
with Richardson stratification
\cite{KWY}. 
He, Knutson, and Lu 
first introduced the definition of Bruhat atlases,
and
constructed Bruhat atlases
for the wonderful compactification of a group $G$ with a certain
anticanonical stratification \cite{HKL}. 
The modeling Kac-Moody
flag manifold is in general
neither finite nor affine type.

A Bruhat atlas 
on a stratified manifold is desirable, because 
the
stratification of (opposite)
Bruhat cells with Schubert varieties is well-understood and has
many nice properties.  If a Bruhat atlas
can be constructed, the manifold with this stratification 
automatically enjoys these properties. For example, in $X^w_o$ stratified by $X_v$, each open stratum is smooth, and
each closed stratum is normal and Cohen-Macaulay, with rational
singularities \cite{brion2007frobenius}. 

Our main theorem concerns the wonderful compactification of the
symmetric space $PSO(2n)/SO(2n-1)$
with the anticanonical stratification specified below. We construct a Bruhat
atlas on this space whose open charts
are modelled by certain Bruhat cells
in the type $D_{n+1}$ flag manifold.
\begin{thm}
Let $M=\overline{G/K}$ where
$G=PSO(2n)$ and $K=SO(2n-1)$
Then $M$ is $G$-isomorphic to the
projective space $\mathbb{P}^{2n-1}$.
Let $M$ 
bear the stratification generated by
the following set of divisors $x_1,x_2,x_1x_2+x_3x_4+\cdots+x_{2n-1}x_{2n}$. Then $M$ possesses a Bruhat
atlas consisting of charts
$X^{w_i}_\circ$ $(1\le i\le 2n)$ from the flag manifold
$SO(2n+2)/B$, where the $w_i$'s have
the following reduced words
$w_1=0(n-1)\cdots 32123\cdots (n-1)0$,
$w_2=n(n-1)\cdots32123\cdots (n-1)n$, and for all $2\le i\le n$,
 $w_{2i-1}=(n+1-i)(n-i)\cdots 12\cdots (n-1)0n(n-1)\cdots (n+2 -i)$, and 
$w_{2i} = w_{2i-1}^{-1}$, as illustrated below:
\begin{center}
  \begin{dynkin}
   \pgftransformcm{0.7}{0}{0}{0.8}{\pgfpoint{0cm}{0cm}}
    \foreach \x in {1,...,3}
    {
        \dynkindot{\x}{0}{$\scriptstyle \x$}
    }
    \draw[->,thick,red] (11,1.5) to [out=225, in=0] (10,0.5)
    to (1.5,0.5) to[out=180, in=165]  (1.5,0.9) to
    (10,0.9) to [out=0, in=235](10.6,1.55);
   \dynkindot{5}{0}{}
   \dynkindot{6}{0}{}
   \dynkindot{7}{0}{$\scriptstyle n-1$}
    \dynkindott{7.5}{.9}{$\scriptstyle0$}
    \dynkindott{7.5}{-.9}{$\scriptstyle n$}

    \dynkinline{1}{0}{2}{0}
    \dynkinline{2}{0}{3}{0}
    \dynkindots{3}{0}{4}{0}
    \dynkindots{4}{0}{5}{0}
    \dynkinline{5}{0}{6}{0}
    \dynkinline{6}{0}{7}{0}
    \dynkinline{7}{0}{7.5}{.9}
    \dynkinline{7}{0}{7.5}{-.9}
    \node[] at (7,-3) { \ \ \ \ \ \ \ \ \ \ \ \ \ \ \ \ \ \ };
  \end{dynkin}
  \qquad
   \begin{dynkin}
   \pgftransformcm{0.7}{0}{0}{0.8}{\pgfpoint{0cm}{0cm}}
   \draw[->,thick,red] (11,-1.5) to [out=135, in=0] (10,-0.5)
    to (1.5,-0.5) to[out=180, in=165]  (1.5,-0.9) to
    (10,-0.9) to [out=0, in=125](10.6,-1.55);
    \foreach \x in {1,...,3}
    {
        \dynkindot{\x}{0}{$\scriptstyle \x$}
    }
    
   \dynkindot{5}{0}{}
   \dynkindot{6}{0}{}
   \dynkindot{7}{0}{$\scriptstyle n-1$}
    \dynkindott{7.5}{.9}{$\scriptstyle0$}
    \dynkindott{7.5}{-.9}{$\scriptstyle n$}

    \dynkinline{1}{0}{2}{0}
    \dynkinline{2}{0}{3}{0}
    \dynkindots{3}{0}{4}{0}
    \dynkindots{4}{0}{5}{0}
    \dynkinline{5}{0}{6}{0}
    \dynkinline{6}{0}{7}{0}
    \dynkinline{7}{0}{7.5}{.9}
    \dynkinline{7}{0}{7.5}{-.9}
    \node[] at (7,-3) { \ \ \ \ \ \ \ \ \ \ \ \ \ \ \ \ \ \ };
  \end{dynkin}
  \qquad
   \begin{dynkin}
   \pgftransformcm{0.7}{0}{0}{0.8}{\pgfpoint{0cm}{0cm}}
    \draw[->,thick,red] (4.5,0.5) to  (1.5,0.5)
    
    to [out=180, in=165]  (1.5,0.9) to  (10,0.9) to [out=0, in=235]
    (11.3,2) to [out=45,in=125](12.7, 1.3) to [out=270, in=90](11.3,0)
    to [out=270, in=135](12.7,-1.3) to
    [out=270,in=0](11.3, -2)
    to [out=165,in=0] (10,-0.9) to (6,-0.9);
    \foreach \x in {1,2}
    {
        \dynkindot{\x}{0}{$\scriptstyle \x$}
    }
    \dynkindot{3}{0}{}
    \dynkindot{4}{0}{}
   
   \dynkindot{6}{0}{}
   \dynkindot{7}{0}{$\scriptstyle n-1$}
    \dynkindott{7.5}{.9}{$\scriptstyle0$}
    \dynkindott{7.5}{-.9}{$\scriptstyle n$}

    \dynkinline{1}{0}{2}{0}
    \dynkindots{2}{0}{3}{0}
    \dynkinline{3}{0}{4}{0}
    \dynkindots{4}{0}{5}{0}
    \dynkindots{5}{0}{6}{0}
    \dynkinline{6}{0}{7}{0}
    \dynkinline{7}{0}{7.5}{.9}
    \dynkinline{7}{0}{7.5}{-.9}
    \node[] at (7,-3) {and inverse of this word.};
  \end{dynkin}
  \end{center}

\end{thm}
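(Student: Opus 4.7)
My plan proceeds in three stages. \emph{Identification of $M$.} First, I would verify $M\cong\P^{2n-1}$ as a $G$-variety. The defining representation of $SO(2n)$ on $\C^{2n}$ preserves the split form $Q=x_1x_2+x_3x_4+\cdots+x_{2n-1}x_{2n}$, and the stabilizer in $PSO(2n)$ of any $[v]\in\P^{2n-1}$ with $Q(v)\ne 0$ is $SO(\langle v\rangle^\perp)\cong SO(2n-1)$. Hence $\{Q\ne 0\}\subset\P^{2n-1}$ is $G$-equivariantly isomorphic to $G/K$, with complement the unique closed $G$-orbit (the smooth quadric $\{Q=0\}$). Since the rank of the symmetric space is $1$, the Weyl chamber fan in Theorem 1(b) is trivial and there is a single boundary divisor; checking parts (a) and (c) is then routine, so by the uniqueness clause of Theorem 1 we get $M\cong\P^{2n-1}$.

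\emph{Stratification and minimal strata.} Under this identification, $D_1=\{x_1=0\}$, $D_2=\{x_2=0\}$, and $D_3=\{Q=0\}$ are $B$-stable for an appropriate Borel $B\subset PSO(2n)$: $D_3$ is the $G$-boundary, while $D_1, D_2$ are the closures of the two codimension-one $B$-orbits inside $G/K$. Taking all intersections and recursively refining by singular loci produces $2n$ zero-dimensional minimal strata $p_i:=[e_i]$, $i=1,\ldots,2n$. Concretely, $[e_1]$ is the apex of the quadric cone $D_2\cap D_3$, $[e_2]$ the apex of $D_1\cap D_3$, and the remaining $p_i$ arise after further refining $D_1\cap D_2\cap D_3$. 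For each $p_i$ I take $U_i$ to be an appropriate affine open neighborhood of $p_i$ in $\P^{2n-1}$, chosen so that $M=\bigcup_{i=1}^{2n} U_i$.

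\emph{Chart maps and stratification compatibility.} For each $i$, I construct $c_i:U_i\xrightarrow{\sim}X^{w_i}_\circ\subset SO(2n+2)/B_H$ via the reduced-word parametrization of the Bruhat cell: given $w_i=s_{j_1}\cdots s_{j_{2n-1}}$, the map $(t_1,\ldots,t_{2n-1})\mapsto \prod_{\ell} u_{\alpha_{j_\ell}}(t_\ell)\dot{s}_{j_\ell}\cdot B_H$ is a birational open immersion $\mathbb{A}^{2n-1}\to X^{w_i}_\circ$, and after a change of affine coordinates on $U_i\cong\mathbb{A}^{2n-1}$ I would identify it with the chart. The specific reduced words in the statement are dictated by this matching. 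The main obstacle is then showing that $c_i$ is stratified: one must define the ranked poset injection $v:\mathcal{Y}^{\mathrm{op}}\to W_{D_{n+1}}$ and check $c_i(U_i\cap M_y)=X^{w_i}_\circ\cap X_{v(y)}$ for every stratum $M_y$ containing $p_i$. This reduces to showing that Bruhat sub-intervals of $[1,w_i]$ biject with the unions of divisors (and their refined strata) passing through $p_i$, with the two fork nodes $0$ and $n$ of the $D_{n+1}$ Dynkin diagram encoding the two branches of the quadric $D_3$ near $p_i$ and the nodes $1,\ldots,n-1$ encoding $D_1, D_2$ and their higher-codimension refinements. The palindromic shape of $w_1, w_2$ reflects that $p_1, p_2$ lie on only two of the three divisors (making the local stratification $0\leftrightarrow n$ symmetric), while the asymmetric pairs $w_{2i-1}, w_{2i}=w_{2i-1}^{-1}$ for $i\ge 2$ correspond to the dually-paired points $[e_{2i-1}], [e_{2i}]$ on the smooth quadric $D_1\cap D_2\cap D_3$, exchanged by the hyperbolic duality $e_{2i-1}\leftrightarrow e_{2i}$. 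In practice I expect to verify the stratified property first for small $n$ by explicit coordinate computation, then extract the uniform pattern and promote it to an inductive argument on the reduced-word structure.
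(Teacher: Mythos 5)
Your framework matches the paper's: identify $M$ with $\P^{2n-1}$ via the De Concini--Procesi theorem (the paper's Proposition~1 does exactly this, with the same base point and the same trivial rank-$1$ fan), use the standard affine cover around the $2n$ coordinate points, parametrize the cells $X^{w_i}_\circ$ in Bott--Samelson coordinates, and verify the charts are stratified. But the proposal stops at exactly the point where the paper's real work begins, and what you defer is the entire content of the theorem.

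Concretely, two things are missing. First, you never compute the defining ideals of the Schubert divisors $X_{r_\alpha}\cap X^{w_i}_\circ$ inside each cell. The paper does this in Propositions~2--4 by pulling back Fulton's essential-set generators of matrix Schubert ideals through the Bott--Samelson map, evaluating the resulting minors with the Lindstr\"om--Gessel--Viennot lemma on wiring diagrams, and invoking a determinantal complementation (Lemma~2 / Corollary~1) to handle the minors of size $2(n+1)-\alpha$; it must also be verified separately that each of these pullback ideals is principal. None of this is sketched in your plan, and there is no obvious substitute. Second, you say the charts are obtained ``after a change of affine coordinates'' and that you ``expect to verify the stratified property first for small $n$ \ldots then extract the uniform pattern and promote it to an inductive argument.'' The paper does no induction: it writes down completely explicit formulas for $c_1,\ldots,c_{2n}$ (Proposition~5) and checks by direct substitution that the pullbacks of the ideal generators from Propositions~2--4 are precisely the local equations of the divisors. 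Finding those formulas is the hard part, and ``dictated by this matching'' elides it. You also never define the ranked poset injection $v:\mathcal{Y}^{op}\to W_H$, and your discussion of the stratification uses only the three named divisors, whereas the paper works with the full anticanonical collection $D_1, D_{1'}, D_2,\ldots,D_n$ ($n+1$ divisors) and must match all of them. As written, the proposal is a plausible outline of the paper's strategy but not a proof of the statement.
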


\subsection{$\overline{PSO(2n)/SO(2n-1)}$}
Let $J$ be $2n\times 2n$ the matrix with ones on the antidiagonal  and
zeroes everywhere else. Realize the group $SO(2n)$ as the subgroup of 
$GL(2n)$ that preserves the quadratic form $J$. That is,
$SO(2n)=\{g\in GL(2n): g^TJg=J\}$. Let $J'$ be the matrix 
\[\left[\begin{smallmatrix}
 & & & & &1 \\
 &1 & & & &\\
  & &1 & & &\\
  & & & \ddots & & \\
  & & & & 1 & \\
  1 & & & & &
\end{smallmatrix}\right]\]
and $\theta$ the outer automorphism on $SO(2n)$ defined by conjugation
by $J'$. 
Let $G=PSO(2n)$, and $K=G^\theta$. 
Let $T$ be the maximal torus
formed by diagonal
matrices in $G$.
Explicitly,
$T=\{\text{diag}(t_1,t_2,\cdots,t_n,\frac{1}{t_n},\cdots,\frac{1}{t_2},\frac{1}{t_1}) :t_i\in \C^\times\}$. 
 $T$ is a maximally split torus with respect to $\theta$. 
 $T^{-\theta}=\{\text{diag}(t_1,\pm 1,\cdots,\pm 1,\frac{1}{t_1}) :t_1\in \C^\times\}$,
 and $A=\{\text{diag}(t_1, 1,\cdots, 1,\frac{1}{t_1}) :t_1\in \C^\times\}$.
 $T_1$ can be identified with $A$ quotiented out by the
  relation
 $\text{diag}(t_1,1,\cdots,1,\frac{1}{t_1})\sim \text{diag}(-t_1,1,\cdots,1,-\frac{1}{t_1}) $.

\begin{prop}
$\overline{G/K}$ is $G$-isomorphic to $\P^{2n-1}$.
\end{prop}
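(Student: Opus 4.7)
The plan is to identify $\ol{G/K}$ with $\P^{2n-1}$ by exhibiting a $G$-equivariant morphism $G/K\to\P^{2n-1}$ landing in an open orbit and then invoking the uniqueness clause of the De Concini--Procesi theorem quoted above. Since $A$ is $1$-dimensional, the restricted root system has rank $1$, so the wonderful compactification is characterized as the unique smooth projective $G$-equivariant completion of $G/K$ whose boundary is a single smooth $G$-stable divisor, and $\P^{2n-1}$ under the natural $G$-action manifestly has this shape.

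First I would construct the orbit map. Take $v=e_1-e_{2n}$; then $v^TJv=-2\neq 0$, so $v$ is non-isotropic, and since $J'v=-v$ the line $[v]\in\P^{2n-1}$ is fixed by $K=G^\theta$. This yields a well-defined $G$-equivariant morphism $\phi:G/K\to\P^{2n-1}$, $gK\mapsto g\cdot[v]$. The heart of the step is the stabilizer computation. In $SO(2n)$, any element sending $[v]$ to itself must send $v$ to $\pm v$, hence preserves the decomposition $\C^{2n}=\C v\oplus v^\perp$, so $\text{Stab}_{SO(2n)}([v])=S(O(\C v)\times O(v^\perp))\cong O(2n-1)$ (the isomorphism being projection onto the second factor, well-defined because $\epsilon=\det(g)$). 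Passing to $G=PSO(2n)=SO(2n)/\{\pm I\}$, the element $-I$ corresponds under this iso to $-I\in O(2n-1)$; since $\det(-I)=(-1)^{2n-1}=-1$, it lies in the non-identity component of $O(2n-1)$, so $O(2n-1)/\{\pm I\}\cong SO(2n-1)=K$. Hence $\phi$ is an injective orbit map onto the open orbit $\{[w]:w^TJw\neq 0\}=\P^{2n-1}\setminus Q$, where $Q$ is the smooth isotropic quadric.

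Second, I would verify that $(\P^{2n-1},[v])$ satisfies properties (a)--(c) of the De Concini--Procesi theorem, so that uniqueness gives $\ol{G/K}\cong\P^{2n-1}$. Property (a) is the stabilizer calculation above. For (b), $t\cdot v=t_1 e_1-t_1^{-1}e_{2n}$, so the $T$-orbit of $[v]$ factors through the quotient $T_1\cong\C^*$ and extends to the $\P^1$ spanned by $e_1$ and $e_{2n}$; this is the unique normal $T_1$-embedding, and its fan consists of the two Weyl chambers of $W_A\cong\Z/2$, with the two $T$-fixed points $[e_1]$ and $[e_{2n}]$ lying on $Q$. For (c), the affine open $Y\subset\P^1$ corresponding to the chamber $C$ is an $\mathbb{A}^1$, and $U\cdot Y$ is an affine open of $\P^{2n-1}$ of the correct dimension $\dim U+\dim Y=(2n-2)+1=2n-1$; freeness of the $U$-action on $Y$ off the $T$-fixed point then upgrades this to an isomorphism onto a standard affine chart.

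The main obstacle is the stabilizer computation in $PSO(2n)$, where one must carefully track how the center $\{\pm I\}\subset SO(2n)$ is absorbed into $O(2n-1)$; the identification $O(2n-1)/\{\pm I\}\cong SO(2n-1)$ hinges on the parity $(-1)^{2n-1}=-1$, which is exactly what places $-I$ in the non-identity component. Once this is correctly performed, the remaining verifications are essentially formal, and uniqueness in DP delivers the desired $G$-isomorphism.
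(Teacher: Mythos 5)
Correct, and essentially the same approach as the paper: both choose the non-isotropic base point $[e_1-e_{2n}]\in\P(V)$ and verify the De Concini--Procesi conditions (a)--(c), then invoke the uniqueness clause. Your stabilizer computation in $PSO(2n)$ is somewhat more careful than the paper's terse version (tracking $\pm I$ through $S(O(1)\times O(2n-1))\cong O(2n-1)$), while for (c) you replace the paper's explicit matrix description of $U$ and $Y$ with a dimension count and freeness assertion; either way the argument goes through.
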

\begin{proof}
We check all  conditions
stated in Theorem 1.
Let $V=\C^{2n}$ be the standard representation of $SO(2n)$.
Let $h=[1,0,\cdots,0,-1]^T\in V$. 
The fixed subgroup of $\theta$ in $SO(2n)$
stabilizes this vector. Note that
$h$ is not an isotropic vector (it does not have length 0),
so the stabilizer is the group $SO(2n-1)$ fixing the $(2n-1)$-dimensional orthogonal complement of $h$. 
  Projectivize, we have $G$ acts on $\P(V)$.
  (Note that $SO(2n-1)$ has trivial center.) (a) follows using the image of
  $h$ in $\P(V)$ as the base point. 
  
  The $T$-orbit of $h$ is 
  $\{[t_1:0\cdots 0:-\frac{1}{t_1}]\in \P(V)\}$ whose closure is a
  $\P^1$. This gives a normal embedding of $T_1$. Thus (b) follows.
  
$U$ contains all elements of the form
\[\left[\begin{smallmatrix}
 1&b_{1,2}& b_{1,3} &\cdots & b_{1,2n-1}&-m\\
 &1 & & & &-b_{1,2n-1}\\
  & &1 & & & \vdots\\
  & & & \ddots & & -b_{1,3} \\
  & & & & 1 & -b_{1,2}\\
   & & & & &1
\end{smallmatrix}\right]\] where 
 $m=\sum_{i=2}^n b_{1,i}b_{1,2n-i+1}  $. 
 Meanwhie, $Y=\{[(c,0,\cdots,0,1)^T]:c\in \C\}$. Therefore,
 $U\times Y$ is isomorphic to an affine open set of $\P(V)$
 (i.e. the affine open set with the last coordinate being nonzero).
\end{proof}

We use the coordinatization
$[x_1:x_3:x_5:\cdots x_{2n-1}:x_{2n}:x_{2n-2}:\cdots :x_2]$ on 
$\P^{2n-1}=\overline{G/K}$. Let $f_1 = x_1$, $f_{1'}=x_2$,
$f_i=x_1x_2+x_3x_4+\cdots x_{2i-1}x_{2i}$ for all $2\le i\le n$
and $D_i=\{f_i=0\}$ be the corresponding divisors for all 
$i\in \{1,1',2,\cdots, n\}$. Then $D=D_1\cup D_1'\cup D_2\cdots \cup D_n$ 
is an anticanonical divisor on $\P^{2n-1}$. Let $\mathcal{Y}$ be 
the strata generated by $D$. 
Notice that $D_1$ is the closure of the unique codimension-1 $B$-orbit,
 $D_{1'}$ is the closure of the unique codimension-1 $B^-$-orbit,
and $D_n$ is the closure of the unique codimension-1 $G$-orbit. 
We show that the stratification $(\overline{G/K},\mathcal{Y})$ 
admits a Bruhat atlas, where the modeling Kac-Moody group $H$
is $SO(2n+2)$. To do this, we need to supply the following
data:
\begin{itemize}
\item An open over consisting of open sets around the 
minimal strata of $\mathcal{Y}$. This is exactly 
the standard cover by affine open sets
$U_i=\{x_i\neq 0\}$  of the projective space. 
\item A total number of $2n$ reduced words $w_i$ in $W_H$, each of length $2n-1$, to specify $X_\circ^{w_i}$ that corresponds
to exactly one of the affine chart $U_i$.
\item Isomorphisms $c_i:U_i\to X_\circ^{w_i}$ 
that preserves the stratifications.

\end{itemize}
\section{Coordinatization of the
Opposite Bruhat Cells on $SO(2n+2)/B$}
\subsection{Bott-Samelson Coordinates on $X^w_\circ$}
Given $w\in W_H$, the space $X_\circ^w$ is an affine space dimension $l(w)$. We coordinatize it
using a Bott-Samelson map.
For any reduced word $Q=(r_{\alpha_1},\cdots, r_{\alpha_{l(w)}})$
 of simple reflections in
$W_H$ 
such that $\prod Q=w$
we associate to it the Bott-Samelson variety
$$BS^Q:=P_{\alpha_1}\times^B \cdots
\times^B P_{\alpha_{l(w)}}/B$$ where $\times^B$
means divide by the $B$-action
on both sides. (More precisely, this action is $b\cdot(p_1,p_2)=(p_1b,b^{-1} p_2)$
for $b\in B$, $p_1\in P_1$, $p_2\in P_2$
where $P_1$ and $P_2$ are parabolics.)
The left $B$-invariant Bott-Samelson map
$$\beta_Q: BS^Q \to H/B,\ \ 
   [p_1,\cdots, p_{l(w)}]\mapsto p_1\cdots p_{l(w)}B/B$$
has image $X^w$. For each $P_{\alpha}$ where
$\alpha$ is a simple reflection, we have 
$z\mapsto e_{\alpha}(z)\widetilde{r_{\alpha} }\in P_{\alpha}$,
where  $\widetilde{r_\alpha}$ is a lift
 of $r_\alpha\in W_H$ to $H$,
$e_{\alpha}(z)=\exp(zR_{\alpha})$, 
and $R_{\alpha}$ is a basis
vector that spans the root space indexed by $\alpha$. 

Putting these together we get a parametrization

\begin{center}
$\begin{array}{clll}
\mathbb{A}^{l(w)}&\xrightarrow{m_Q} & BwB & \to  X^w_\circ \\ 
(z_1,\cdots, z_{l(w)})&\mapsto &\prod_{i=1}^{l(w)}  e_{\alpha_i}(z_i) \widetilde{r_{\alpha_i}} &  \mapsto
 \prod_{i=1}^{l(w)}  e_{\alpha_i}(z_i) \widetilde{r_{\alpha_i}} B/B
\end{array}$
\end{center}
where the composite is an isomorphism. 

We now describe explicitly the
parametrization of opposite Bruhat
cells in $SO(2n+2)/B$.
We again realize $SO(2n+2)$ with the 
$(2n+2)\times(2n+2)$
determinant 1 matrices that preserve the symmetric 
form given by the antidiagonal matrix.

 We use the following labeling of the 
 Dynkin diagram, and give
 explicit description of 
 the lift of the simple reflections
 in the Weyl group to
 $SO(2n+2)$.

 \begin{figure}[h]
     \centering
     \begin{dynkin}
    \foreach \x in {1,...,3}
    {
        \dynkindot{\x}{0}{\x}
    }
    
   \dynkindot{5}{0}{}
   \dynkindot{6}{0}{}
   \dynkindot{7}{0}{$n-1$}
    \dynkindott{7.5}{.9}{0}
    \dynkindott{7.5}{-.9}{$n$}

    \dynkinline{1}{0}{2}{0}
    \dynkinline{2}{0}{3}{0}
    \dynkindots{3}{0}{4}{0}
    \dynkindots{4}{0}{5}{0}
    \dynkinline{5}{0}{6}{0}
    \dynkinline{6}{0}{7}{0}
    \dynkinline{7}{0}{7.5}{.9}
    \dynkinline{7}{0}{7.5}{-.9}
  \end{dynkin}
    \caption{}
     \label{fig:dynkin}
 \end{figure}

 Description of the $2(n+1)\times 2(n+1)$ matrix 
 $\widetilde{r_{\alpha}}$ for $1\le \alpha \le n$:
 $$(\widetilde{r_{\alpha}})_{i,j}=\begin{cases}1 &\text{ if } i=j,\ \ i\not\in\{\alpha, \alpha+1, 2(n+1)-\alpha,2(n+1)-\alpha+1\} \\
 1 &\text{ if }(i,j)\in \{(\alpha, \alpha+1),(\alpha+1,\alpha)\}  \\
 1 &\text{ if }(i,j)\in\{(2(n+1)-\alpha, 2(n+1)-\alpha+1),(2(n+1)-\alpha+1,2(n+1)-\alpha)\} \\
 0 &\text{otherwise}
 \end{cases}$$
 
 Description of $\widetilde{r_0}$:
 $$(\widetilde{r_0})_{i,j}=\begin{cases}1 &\text{ if } i=j,\ \ i\not\in\{n,n+1,n+2,n+3\} \\
 1 &\text{ if }(i,j)\in \{(n,n+2),(n+1,n+3)\}  \\
 1 &\text{ if }(i,j)\in\{(n+2,n),(n+3,n+1)\} \\
 0 &\text{otherwise}
 \end{cases}$$
 

Explicitly when
$H=SO(2n+2)$, for $1\le \alpha \le n$, 
$e_\alpha(z)\widetilde{r_{\alpha}}$ is the identity matrix modified
by replacing the $2\times 2$ block starting at $(i,i)$ with
$\left(\begin{smallmatrix}-z & 1\\
1&0\end{smallmatrix}\right)$ and the $2\times 2$ block
starting at $(2n+2-i,2n+2-i)$ with 
$\left(\begin{smallmatrix}z & 1\\
1&0\end{smallmatrix}\right)$.  For $\alpha=0$,
$e_\alpha(z)\widetilde{r_{\alpha}}$ is the identity matrix modified by replacing
the $4\times 4$ block in the center by
$\left(\begin{smallmatrix}-z & 0 & 1 &0\\ 0 & z & 0 & 1\\
1 & 0 & 0 & 0\\ 0 & 1 & 0 & 0
\end{smallmatrix}\right)$.

For example, when $n=2$, \[ e_0(z)\widetilde{r_0}=\left[\begin{smallmatrix}
 1&0&0&0&0&0\\
 0&-z&0&1&0&0\\
 0&0&z&0&1&0\\
 0&1&0&0&0&0\\
 0&0&1&0&0&0\\
 0&0&0&0&0&1\end{smallmatrix}\right],
 e_1(z)\widetilde{r_1}=\left[\begin{smallmatrix}
 -z&1&0&0&0&0\\
 1&0&0&0&0&0\\
 0&0&1&0&0&0\\
 0&0&0&1&0&0\\
 0&0&0&0&z&1\\
 0&0&0&0&1&0\end{smallmatrix}\right],
 e_2(z)\widetilde{r_2}=\left[\begin{smallmatrix}
 1&0&0&0&0&0\\
 0&-z&1&0&0&0\\
 0&1&0&0&0&0\\
 0&0&0&z&1&0\\
 0&0&0&1&0&0\\
 0&0&0&0&0&1\end{smallmatrix}\right]\]

\subsection{Defining ideals for Schubert divisors}
Our specific choice of quadratic form for defining
$SO(2n+2)$ inside $GL(2n+2)$ ensures that
$B=B_{SO}= SO(2n+2)\cap B_{GL}$. In order to find
the defining ideals for Schubert divisors on
opposite Bruhat cells parametrized by Bott-Samelson
coordinates, we will pullback generators
for the ideals of
for matrix Schubert varieties via the parametrization.
Consider the following diagram:

\begin{center}
\begin{tikzcd}
m^{-1}(\overline{B^{GL}_- \widetilde{r_\alpha} B^{GL}}) \arrow[r] \arrow[d]
&\overline{B_- \widetilde{r_\alpha} B} \arrow[d, hook]
\arrow[r, hook] & \overline{B^{GL}_- \widetilde{r_\alpha} B^{GL}} \arrow[d,hook]\\
\mathbb{A}^{l(w)} \arrow[r,  "m" ] 
&  G \arrow[r,hook] \arrow[dr] & Mat(2n) \\
BS_\circ^Q \arrow[r, "\sim"] \arrow[u,"\simeq"]& X_\circ^w \arrow[r,hook]
\arrow[u, dotted, hook] & G/B
\end{tikzcd}
\end{center}
The image of $m$ is a lift of $X_\circ^w$ in $G$.
This is possible because $X_\circ^w$ is an affine space.
Since $X^w_\circ\cap X_{r_{\alpha}}$ is a codimension
one irreducible subvariety of $X^w_\circ$, it is defined by
one equation, given coordinates on $X^w_\circ$.
 For $x\in X^w_\circ\cap X_{r_\alpha}$,
 $\widetilde{x}\in G$ (the lift of $x$ to $G$)
 is an element of $\overline{B_-\widetilde{r_\alpha}B}\subset \overline{B_-^{GL}\widetilde{r_\alpha}B^{GL}}$.
 Now $\overline{B^{GL}_- \widetilde{r_\alpha} B^{GL}}$ is a matrix
 Schubert variety in $Mat(2n)$, whose
ideal generators are described below. 
 Pulling  back
 these generators via the parametrization $m$, we get the 
 generators of the ideal 
 for $m^{-1}(\overline{B^{GL}_- \widetilde{r_\alpha} B_{GL}})$ on $\mathbb{A}^{l(w)}$. 
 The vanishing set 
 of this ideal
 must contain $X^w_\circ \cap X_{r_\alpha}$.
 If this ideal is a principal ideal, we know that it 
 must be equal to the ideal for $X^w_\circ\cap X_{r_\alpha}$ on $X^w_\circ$. We will show that this is indeed the
 case by explicit computations in Section 4.
 
 The following definition and theorem of Fulton 
 describe a procedure to find defining ideals
 for matrix Schubert varieties.
\begin{defn}
The \textbf{essential set} $\mathcal{E}ss(\pi)$
of an $n\times n$ permutation matrix $\pi$
is defined as 
\[\mathcal{E}ss=\{(i,j)\in[1,n-1]\times[1,n-1]: \pi(i)>j, \pi^{-1}(j)>i, \pi(i+1)\le j, \pi^{-1}(j+1)\le i\}.\]
\end{defn}
Intuitively, these are the matrix entries that
are the southeast corners of the remaining 
connected regions obtained by
crossing out entries to the east and the south of
each 1 in the permutation matrix, including the 1 itself.
\begin{thm}[\cite{fulton92}]
For any permutation matrix $\pi\in S_n$ and a generic $n\times n$ matrix
$A=(x_{ij})$, 
the matrix Schubert variety $X_\pi=\overline{B_-\pi B}$
has radical ideal $I_\pi$ in the polynomial ring of
the variables $x_{11},\cdots, x_{nn}$ generated
by minors of $A$ of size $m(i,j)+1$ in the 
northwest submatrix with southeast corner $(i,j)$,
where $(i,j)\in \mathcal{E}ss(\pi)$ and $m(i,j)$
counts the number of $1$'s in that northwest submatrix
of $\pi$.
\end{thm}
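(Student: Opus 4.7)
The plan is to prove Fulton's theorem in three stages: establish set-theoretic equality $X_\pi = V(J_\pi)$, where $J_\pi$ is the ideal generated by the essential-set minors; reduce from all northwest rank conditions to those on $\mathcal{E}ss(\pi)$; and establish radicality of $J_\pi$. The key initial observation is that for any $M\in \mathrm{Mat}(n,n)$, the rank of the northwest $i\times j$ block $M_{[1,i],[1,j]}$ is invariant under left-multiplication by $B_-$ and right-multiplication by $B$, since those operations only add multiples of later rows to earlier rows (and earlier columns to later columns), leaving the row and column spans of the northwest block unchanged. Hence on $B_-\pi B$ this rank is identically $m(i,j)$, and since $\mathrm{rank}\le m(i,j)$ is a Zariski-closed condition, $X_\pi$ is contained in the scheme $Y_\pi$ cut out by all $(m(i,j)+1)$-minors of northwest submatrices of the generic matrix $A$.

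For the reverse inclusion, I would show that any $M\in Y_\pi$ admits a $B_-\times B$ reduction via southeast-to-northwest Gaussian elimination to a unique permutation matrix $\pi'$; the rank bounds at each $(i,j)$ then translate directly into the Ehresmann tableau criterion for $\pi'\le \pi$ in Bruhat order, so $M\in B_-\pi' B\subset X_\pi$. To pass from all northwest rank conditions to just the essential set, I would argue that if $(i,j)\notin\mathcal{E}ss(\pi)$, at least one of the four corner conditions in the definition of $\mathcal{E}ss$ fails; a short case analysis shows that whenever $m(i,j)=m(i+1,j)$ (or the analogous east-step), every $(m(i,j)+1)$-minor of the $i\times j$ submatrix appears as a minor of the $(i+1)\times j$ submatrix, so non-essential rank conditions are already implied by essential and boundary ones. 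Together these two steps give $V(J_\pi)=X_\pi$ set-theoretically.

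The main obstacle is radicality of $J_\pi$, which promotes the set-theoretic equality to the ideal-level equality $I_\pi=J_\pi$. The cleanest route is via Gr\"obner degeneration: under an antidiagonal term order on $\C[x_{ij}]$, one shows that the initial ideal $\mathrm{in}(J_\pi)$ is a square-free monomial ideal, namely the Stanley--Reisner ideal of the pipe-dream complex of $\pi$. Square-free monomial ideals are automatically radical, and radicality lifts from the initial ideal to the original by a standard Noetherian induction on leading terms: if $f^N\in J_\pi$, then $\mathrm{in}(f)^N\in\mathrm{in}(J_\pi)$ forces $\mathrm{in}(f)\in\mathrm{in}(J_\pi)$, so one can subtract off an element of $J_\pi$ with matching leading term and reduce to a strictly smaller leading monomial. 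An alternative staying closer to Fulton's original argument uses induction on $\ell(\pi)$ via divided-difference operators applied to double Schubert polynomials, matching Hilbert series on both sides; this avoids Gr\"obner machinery but requires more combinatorial setup. Either way, combined with the set-theoretic equality above, one concludes $I_\pi=J_\pi$.
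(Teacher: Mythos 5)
This statement is not proved in the paper at all: it is quoted from Fulton's paper \cite{fulton92} and used as a black box to write down generators for the ideals of the matrix Schubert varieties $\overline{B_-^{GL}\widetilde{r_\alpha}B^{GL}}$, so there is no internal proof to compare yours against. Judged on its own, your outline correctly reproduces the architecture of the known proofs (northwest rank conditions are $B_-\times B$ invariants, hence cut out a locus containing $X_\pi$; essential-set conditions imply all rank conditions; then an ideal-theoretic argument for radicality), but the two hardest steps are asserted rather than argued. The radicality step as you present it \emph{is} the theorem of Knutson and Miller that the antidiagonal initial ideal of $J_\pi$ is the Stanley--Reisner ideal of the pipe-dream complex; invoking it is not a proof, and the alternative you mention (induction on length via divided differences and double Schubert polynomials, which is closer to Fulton's actual argument, where one in fact proves primality, not just radicality) is likewise only gestured at. A referee would count this as the main gap.

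There are also concrete inaccuracies in the set-theoretic half. Left multiplication by $B_-$ adds multiples of \emph{earlier} rows to \emph{later} rows (not the reverse, as you wrote); the conclusion that northwest ranks are preserved is still correct, but your stated reason would apply to $B_+$ and would destroy the northwest block. More seriously, $X_\pi=\overline{B_-\pi B}$ is a closure inside all of $\mathrm{Mat}(n)$, so $Y_\pi$ contains singular matrices, and such an $M$ does not reduce under $B_-\times B$ to a permutation matrix but to a partial permutation matrix; your Ehresmann/Bruhat-order comparison only handles the invertible stratum. To close the set-theoretic argument you must either classify the $B_-\times B$ orbits on $\mathrm{Mat}(n)$ by partial permutations and show each orbit allowed by the rank conditions lies in $\overline{B_-\pi B}$, or prove directly that the rank-condition locus is irreducible with $B_-\pi B$ dense in it. (Also note the Bruhat inequality goes the opposite way with this convention: $B_-\pi'B\subset\overline{B_-\pi B}$ corresponds to $\pi'\ge\pi$, i.e.\ to northwest counts $m_{\pi'}(i,j)\le m_\pi(i,j)$.) With those repairs, and with the initial-ideal computation or Fulton's primeness induction actually carried out, your plan would constitute a valid proof, but as written it is an outline of the literature rather than a self-contained argument.
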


\section{Wiring Diagrams}
\subsection{Computing Bott-Samelson Maps with
Wiring Diagrams}

Let $w\in W_H$ and $Q=(r_{\alpha_1},\cdots, r_{\alpha_{l(w)}})$ be a reduced word
of $w$. 
We use ``wiring diagrams'' as a combinatorial gadget
for computing the product 
$$M_Q=\prod_{i=1}^{l(w)}  e_{\alpha_i}(z_i) \widetilde{r_{\alpha_i}}.$$ 

\begin{defn}
Given a matrix $M$ of size $k\times k$,
the wiring diagram for $M$ is a weighted bipartite
graph $(U_M,V_M, E_M)$ such that
$U_M=V_M=[k]$, $E_M=\{(i,j):M_{i,j}\neq 0\}$
and for all $1\le i,j\le k$, the edge weight
$w(i,j)$ of $(i,j)$ is equal to the
matrix entry $M_{i,j}$.
Given an expression $\prod_l M_l$ 
of matrix multiplication, the wiring
diagram for this expression is the concatenation
of $M_l$'s diagrams.
\end{defn}
Given a diagram $\mathcal{D}$ for a matrix multiplication
$\prod_{l=1}^m M_l$
we let $P:i\to j$ denote the path from
the $i$th  vertex of $U_{M_1}$ and the $j$th 
vertex of $V_{M_m}$.
vertex, and $w(P)$ the product
of weights of all edges in $P$.
It is easy to see that
if $M=\prod_l M_l$, then $M_{i,j}=\sum_{P:i\to j}w(P)$.
 We say $\mathcal{D}$ computes $M$.

Using the description of the matrices
$e_\alpha(z)\widetilde{r_\alpha}$, 
in Section 2.1, given the
$Q$, we can construct the wiring
diagram for $M_Q=\prod_{i=1}^{l(w)}  e_{\alpha_i}(z_i) \widetilde{r_{\alpha_i}}$
by concatenating the diagrams for
$e_{\alpha_i}(z_i) \widetilde{r_{\alpha_i}}$.
  In all the wiring diagrams shown
  in this paper, we adopt the convention that
 a solid line means the edge has weight 1,
 a dashed line means the edge has weight
 $-z_i$,
 and a dotted line means the edge has weight
 $z_i$. We denote the diagram
 for the given $Q$ by $\mathcal{D}_Q$.
  
 The following diagram shows $\mathcal{D_Q}$ and $M_Q$ for $Q=012$ and $n=2$:
\begin{center}
   \begin{tikzpicture}
\pgftransformcm{1.1}{0}{0}{0.7}{\pgfpoint{0cm}{0cm}}
\foreach \x in {-1,0,1,2}{
      \foreach \y in {-2,-1,...,3}{
        \node[draw,circle,inner sep=0.5pt,fill] at (\x,\y) {}; } }
        \draw  (-1,2) -- (0,0) ;
\draw  (-1,0) -- (0,2) ;
\draw  [dashed](-1,2) -- (0,2) ;
\draw  (-1,-1) -- (0,1) ;
\draw  (-1,1) -- (0,-1) ;
\draw  [dotted](-1,1) -- (0,1) ;
\foreach \y in {-2,3}
{\draw (-1,\y) -- (0,\y);}
        \draw  (0,3) -- (1,2) ;
\draw  (0,2) -- (1,3) ;
\draw  [dashed](0,3) -- (1,3) ;
\draw  (0,-2) -- (1,-1) ;
\draw  (0,-1) -- (1,-2) ;
\draw  [dotted](0,-1) -- (1,-1) ;
\foreach \y in {0,1}
{\draw (0,\y) -- (1,\y);}
        \draw  (1,2) -- (2,1) ;
\draw  (1,1) -- (2,2) ;
\draw  [dashed](1,2) -- (2,2) ;
\draw  (1,-1) -- (2,0) ;
\draw  (1,0) -- (2,-1) ;
\draw  [dotted](1,0) -- (2,0) ;
\foreach \y in {-2,3}
{\draw (1,\y) -- (2,\y);}
        \node at (7,0) {$\begin{bmatrix}
         -z_2& -z_3 & 1 & 0 & 0 & 0 \\
         -z_1 & 0 & 0 & z_3 & 1 & 0 \\
         0 & z_1 & 0  & z_2 & 0  & 1 \\
         1 & 0 & 0 & 0 & 0 & 0 \\
         0 & 1 & 0 & 0 & 0  & 0 \\
         0 & 0 & 0 & 1 & 0 & 0 
        \end{bmatrix}$ };

  \end{tikzpicture}
\end{center}

Consider the diagram for a product
$M=\prod_{l=1}^m M_l$.
Let $J\subset U_{M_1},J'\subseteq V_{M_m}$ be ordered tuples of size $j$, and let $M^{(J,J')}$ denote
the $(J,J')$-minor of $M$. A matching 
$P=(P_1,\cdots, P_j):J\to J'$ of $J$ and $J'$
is a tuple of $j$ disjoint paths satisfying
the following properties:
\begin{enumerate}[(a)]
    \item There is a permutation $\sigma$
    of $\{1,\cdots, j\}$ such that
    for every $i$, $P_i$ is a path from
    $J_i$ to $J'_{\sigma(i)}$.
    \item Whenever $i\neq k$, $P_i$
    and $P_k$ have no vertices in common,
    including endpoints.
\end{enumerate}
Let $\sigma (P)$ denote the permutation
$\sigma$ from the first condition.
It counts the number of ``$\times$''
formed by this set of paths in the wiring
diagram.
The following lemma is an immediate consequence
of the Lindstr\"om-Gessel-Viennot lemma \cite{Gessel89determinants}
applied to wiring diagrams. 
\begin{lemma}
The $(J,J')$-minor of $M$ can be
computed as follows
\[M^{(J,J')}=\sum_{P=(P_1,\cdots, P_j):J\to
J'} \sign (\sigma(P))\prod_{i=1}^j w(P_i).\]
\end{lemma}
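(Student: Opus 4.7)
The plan is to derive this statement as a direct application of the Lindström--Gessel--Viennot lemma to the layered weighted graph $\mathcal{D}$ obtained by concatenating the wiring diagrams of the factors $M_l$. First I would verify the single-entry version: by induction on the number of factors $m$, the entry $(M)_{i,j}$ of $M=\prod_{l=1}^m M_l$ equals $\sum_{P:i\to j} w(P)$, with the sum taken over all (not necessarily disjoint) paths. This is immediate from the definition of matrix multiplication together with the construction of $\mathcal{D}_Q$ recalled just before the lemma.

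Next I would expand the minor using its definition as a signed sum over permutations and substitute the single-entry identity:
\[M^{(J,J')}=\sum_{\tau\in S_j}\sign(\tau)\prod_{i=1}^j M_{J_i,J'_{\tau(i)}}=\sum_{\tau\in S_j}\sign(\tau)\prod_{i=1}^j\sum_{P_i:J_i\to J'_{\tau(i)}} w(P_i).\]
Distributing the products turns this into a sum over all tuples $(P_1,\ldots,P_j)$ of paths from $J$ to $J'$, weighted by $\sign(\sigma(P))\prod_i w(P_i)$, where $\sigma(P)$ is the underlying permutation of the tuple. At this stage no disjointness condition is imposed.

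The core step is the classical Lindström involution, applied to the subset of tuples in which at least one pair of paths shares a vertex. Take the smallest index $a$ for which $P_a$ meets some other $P_b$, then the smallest such $b$, locate the first common vertex of $P_a$ and $P_b$, and swap the two subpaths occurring after that vertex. The resulting tuple has the same edge-weight product, but its underlying permutation differs from the original by the transposition $(a\ b)$, so the two contributions have opposite sign and cancel. What survives is exactly the sum over vertex-disjoint matchings, which is the claimed formula.

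The one point that needs genuine care, and is the main obstacle to a clean argument, is checking that the involution is well-defined and truly sign-reversing: one needs each path to be simple and two distinct paths to possess an unambiguous ``first common vertex''. Both conditions hold automatically here because $\mathcal{D}$ is layered with all edges going strictly from one column to the next, so a path is determined by its sequence of vertices and the first shared vertex is the one of smallest column index in the intersection. Once this is in place, the conclusion follows immediately from the Gessel--Viennot argument already cited, with no computation specific to the groups $SO(2n+2)$ required.
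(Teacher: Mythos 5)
Your argument is correct and matches the paper's approach: the paper simply cites the Lindstr\"om--Gessel--Viennot lemma as immediately giving this formula, and your proposal just unpacks that citation into the standard sign-reversing involution argument, with the observation (valid here) that the layered, column-monotone structure of $\mathcal{D}_Q$ guarantees the involution is well-defined. No gap to report.
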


\subsection{Operations on Wiring Diagrams}
We now discuss  some operations on wiring diagrams that correspond to
operations on the corresponding matrices. 
\subsubsection{Inverse and transpose}
 First of all, 
 it is easy to see that flipping
 the wiring diagram of a matrix
 horizontally corresponds
 to taking the transpose of the matrix. 
 If $M=\prod_l M_l$ has diagram $\mathcal{D}$,
 we denote the flipped diagram as
 $\mathcal{D}_{\text{flipped}}$. $\mathcal{D}_\text{flipped}$ computes
 $M^T$.

Let $B_{\alpha,z}$ denote the diagram for
$e_\alpha(z)\widetilde{r_\alpha}$.
To get the diagram for $(e_\alpha(z)\widetilde{r_\alpha})^{-1}$ we operate on
$B_{\alpha,z}$ as follows. If $1\le \alpha\le n$, we move the horizontal edge sitting
on top of each of the two ``$\times$'' in the diagram to the bottom of the
``$\times$'', and negate the weight. This is because on the matrix level
 $\left(\begin{smallmatrix}
x & 1 \\ 1 & 0
\end{smallmatrix}\right)^{-1} = \left(\begin{smallmatrix}
0 & 1 \\ 1 & -x
\end{smallmatrix}\right)$. 
If $\alpha=0$, since $\left(\begin{smallmatrix}-x & 0 & 1 &0\\ 0 & x & 0 & 1\\
1 & 0 & 0 & 0\\ 0 & 1 & 0 & 0
\end{smallmatrix}\right)^{-1}=\left(\begin{smallmatrix}0 & 0 & 1 &0\\ 0 & 0 & 0 & 1\\
1 & 0 & x & 0\\ 0 & 1 & 0 & -x
\end{smallmatrix}\right)$, we move the horizontal edge $(n,n)$ to $(n+2,n+2)$,
$(n+1,n+1)$ to $(n+3,n+3)$, and negate their weights. Denote the new
diagram by $B_{\alpha,z}^{-1}$. The diagram that corresponds to $M_Q^{-1}$
is thus obtained by concatenating all  $B_{\alpha_i,z_i}^{-1}$ in reverse order. 
We denote this diagram by $\mathcal{D}_Q^{-1}$.
The diagram below shows $\mathcal{D}_Q^{-1}$ for $Q=012$, $n=2$.
\begin{center}
   \begin{tikzpicture}
\pgftransformcm{1.1}{0}{0}{0.7}{\pgfpoint{0cm}{0cm}}
\foreach \x in {-1,0,1,2}{
      \foreach \y in {-2,-1,...,3}{
        \node[draw,circle,inner sep=0.5pt,fill] at (\x,\y) {}; } }
        \draw  (1,2) -- (2,0) ;
\draw  (1,0) -- (2,2) ;
\draw  [dashed](1,-1) -- (2,-1) ;
\draw  (1,-1) -- (2,1) ;
\draw  (1,1) -- (2,-1) ;
\draw  [dotted](1,0) -- (2,0) ;
\foreach \y in {-2,3}
{\draw (1,\y) -- (2,\y);}

        \draw  (0,3) -- (1,2) ;
\draw  (0,2) -- (1,3) ;
\draw  [dotted](0,2) -- (1,2) ;
\draw  (0,-2) -- (1,-1) ;
\draw  (0,-1) -- (1,-2) ;
\draw  [dashed](0,-2) -- (1,-2) ;
\foreach \y in {0,1}
{\draw (0,\y) -- (1,\y);}

        \draw  (-1,2) -- (0,1) ;
\draw  (-1,1) -- (0,2) ;
\draw  [dotted](-1,1) -- (0,1) ;
\draw  (-1,-1) -- (0,0) ;
\draw  (-1,0) -- (0,-1) ;
\draw  [dashed](-1,-1) -- (0,-1) ;
\foreach \y in {-2,3}
{\draw (-1,\y) -- (0,\y);}
\end{tikzpicture}
\end{center}

\subsubsection{A duality statement}
\begin{lemma}
Let $M$ be a $k\times k$ matrix
and $J, J'\subset [k]$
such that $|J| = |J'|$. Then
\[M^{(J,J')} =\pm (M^{-T})^{([k]\setminus J,[k]\setminus J')}.\]
\end{lemma}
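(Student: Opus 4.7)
The identity is essentially Jacobi's complementary-minor formula for the inverse. I plan to deduce it from that classical identity, observing first that the statement as written holds literally only when $\det M = \pm 1$; in the intended application $M$ will be a product $M_Q$ of lifted simple reflections in $SO(2n+2)$ with $\det M = 1$, so the determinant factor appearing below is absorbed into the sign.

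The reduction is in two steps. Since transposition swaps row and column index sets of any minor,
\[(M^{-T})^{([k]\setminus J,\,[k]\setminus J')} \;=\; (M^{-1})^{([k]\setminus J',\,[k]\setminus J)}.\]
Now invoke the classical Jacobi identity, which states that for invertible $M$ and any equal-size index sets $A,B\subseteq[k]$,
\[(M^{-1})^{(A,B)} \;=\; \pm\,\det(M)^{-1}\,M^{([k]\setminus B,\,[k]\setminus A)},\]
with $A = [k]\setminus J'$ and $B = [k]\setminus J$. The complements $[k]\setminus A$ and $[k]\setminus B$ are exactly $J'$ and $J$, so the right-hand side becomes $\pm\,\det(M)^{-1}\,M^{(J,J')}$. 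Specializing to $\det M = 1$ yields the claim.

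A more combinatorial route, matching the style of this section, would proceed directly through wiring diagrams: one would build a sign-respecting bijection between matchings $J \to J'$ in $\mathcal{D}_Q$ and matchings $[k]\setminus J \to [k]\setminus J'$ in the mirrored diagram $\mathcal{D}_Q^{-1}$ (which computes $M_Q^{-T}$), sending a matching to its path-complement, and then invoke Lemma 1 on both sides. The main obstacle along this route, and the one where the argument is easiest to get wrong, is the sign bookkeeping: verifying that the permutation attached to a complementary matching differs from the original by a predictable global sign, and that the edge-weight negations introduced by the inversion procedure of Section 3.2.1 do not leave any residual dependence on the $z_i$. The Jacobi-based proof sidesteps this by pushing all sign subtleties into the classical identity cited as a black box, which is why I would carry it out that way.
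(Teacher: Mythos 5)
Your proof is correct, and you rightly flag the hidden hypothesis: as stated the lemma needs a normalization on $\det M$ (a scalar matrix shows the two sides differ by a factor of $\det M$), and the paper's own proof quietly restricts to $\det M = 1$, which holds for the $M_Q$ in the application. Your route and the paper's are two presentations of essentially the same fact, but they are structured differently. The paper does not cite Jacobi's complementary-minor identity as a black box; it proves it (in the unimodular case) directly from exterior algebra: the perfect pairing $\bigwedge^i V \otimes \bigwedge^{k-i} V \to \bigwedge^k V$ given by wedging identifies $\bigwedge^{k-i} V$ with $(\bigwedge^i V)^*$ via a diagonal $\pm 1$ change-of-basis matrix, and the naturality of this pairing under $M$ converts the $(J,J')$-entry of $\bigwedge^i M$ into the $([k]\setminus J,[k]\setminus J')$-entry of $\bigwedge^{k-i}(M^T)^{-1}$, up to those signs. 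What your version buys is brevity; what the paper's version buys is self-containment and an explicit description of where the signs come from, which would matter if one later needed to pin them down exactly. Both sensibly avoid the direct wiring-diagram path-complement bijection you sketch as a third option, which, as you observe, would demand delicate sign accounting on the combinatorial side.
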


\begin{proof}
Let $V$ be a $k$-dimensional vector space with ordered basis
$(e_1,\cdots, e_k)$, and $M:V\to V$ be an isomorphism with
determinant 1, i.e., $\bigwedge^k M=1$. We have a
perfect pairing

$$ \bigwedge^{i} V\otimes \bigwedge^{k-i} V\to \bigwedge^k V $$
by wedging together complementary basis elements. More precisely,
 we choose the ordered basis
$(e_J)_{J\subseteq [k], |J|=i}$ 
where $e_J=e_{j_1}\wedge\cdots\wedge e_{j_i}$ for
$\bigwedge^i V$ and use the lexicographic ordering on the indices $J$.  Then we use $(e_{[k]\setminus J})_{J\subseteq [k], |J|=i}$
as the ordered basis for $\bigwedge^{k-i} V$. Hence the pairing
is simply $e_J\otimes e_{J^c}\mapsto e_J\wedge e_{J^c}$. The 
quadratic form written as a 
matrix $A$ for this pairing is a 
$\binom{k}{i}\times \binom{k}{i}$ diagonal matrix with
$\pm 1$ on the diagonal.

We then have the commuting diagram
\begin{center}
\begin{tikzcd}
\bigwedge^i V \arrow[r, "\sim"] \arrow[d, "\bigwedge^i M"] &
\bigwedge^k V\otimes (\bigwedge^{k-i} V)^* \arrow[r, "\sim"]
& \bigwedge^k V\otimes \bigwedge^{k-i} V^* \arrow[d, "\bigwedge^k M\otimes \bigwedge^{k-i}(M^T)^{-1}" ] \\
\bigwedge^i V \arrow[r,  "\sim" ] &
\bigwedge^k V\otimes (\bigwedge^{k-i} V)^*  \arrow[r, "\sim"]
& \bigwedge^k V\otimes \bigwedge^{k-i} V^*
\end{tikzcd}
\end{center}

Since $M$ has determinant 1 and $\bigwedge^k V$ is of dimension 1, the
diagram can be simplified as

\begin{center}
\begin{tikzcd}
\bigwedge^i V \arrow[r, "\sim"] \arrow[d, "\bigwedge^i M"]
&\bigwedge^{k-i} V^* \arrow[d, " \bigwedge^{k-i}(M^T)^{-1}" ] \\
\bigwedge^i V \arrow[r,  "\sim" ] 
&  \bigwedge^{k-i} V^*
\end{tikzcd}
\end{center}

On the coordinate level, the basis for $\bigwedge^{k-i} V^*$
is  $(e^*_{[k]\setminus J})_{J\subseteq [k], |J|=i}$ and 
the matrix for the top and bottom isomorphism in the diagram above is $A$. 
Again, the basis of $\bigwedge^i V$ and the (dual) basis
of $\bigwedge^{k-i} V^*$ are corresponded by complementation of indices. The
lemma then follows, as $M^{(J,J')}$
is the matrix entry in $\bigwedge^i M$ indexed
by $(J,J')$ and $(M^{-T})^{([k]\setminus J,[k]\setminus J')}$ is the matrix entry in
$\bigwedge^{k-i}(M^T)^{-1}$
indexed by $(([k]\setminus J,[k]\setminus J'))$.
\end{proof}
Lemma 2 gives the following statement
on wiring diagrams.
\begin{cor}
When $M=M_Q$ with corresponding wiring
diagram $\mathcal{D}_Q$,
the $(J,J')$-minor of $ M_Q$ obtained by 
enumerating the set of all matchings
between $J$ and $J'$ in
the diagram $\mathcal{D}_Q$ following the
formula in Lemma 1 is the same (up to sign) as the
$([2n+2]-J, [2n+2]-J')$-minor in $M_Q^{-T}$,  obtained by enumerating the set of
all matchings between
$[2n+2]-J$ and $[2n+2]-J'$ in the wiring
diagram
$(\mathcal{D}_Q^{\text{flipped}})^{-1}$.
\end{cor}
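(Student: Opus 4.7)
The plan is to chain Lemma 1 (the Lindstr\"om--Gessel--Viennot formula, which expresses a minor of a matrix as a signed sum over matchings in any wiring diagram that computes it) with Lemma 2 (complementary-minor duality for matrices of determinant $1$). Schematically, Lemma 1 applied to $\mathcal{D}_Q$ gives the $(J,J')$-minor of $M_Q$ as a signed sum over matchings $J\to J'$; Lemma 2 converts this, up to sign, to the $([2n+2]\setminus J,[2n+2]\setminus J')$-minor of $M_Q^{-T}$; a second application of Lemma 1, to any wiring diagram computing $M_Q^{-T}$, re-expresses the latter as a signed sum over matchings of the complementary index sets. The whole content of the corollary is therefore that $(\mathcal{D}_Q^{\text{flipped}})^{-1}$ is such a diagram. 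Before invoking Lemma 2, I would quickly discharge its hypothesis by noting that each unipotent factor $e_{\alpha_i}(z_i)$ has determinant $1$, and the explicit matrix description of each lift $\widetilde{r_\alpha}$ in Section 2.1 exhibits it as a product of two disjoint transpositions, hence of determinant $(-1)^2=1$; multiplicativity then gives $\det M_Q=1$.

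Next I would verify that $(\mathcal{D}_Q^{\text{flipped}})^{-1}$ indeed computes $M_Q^{-T}$. By the flip-versus-transpose observation of Section 3.2.1, $\mathcal{D}_Q^{\text{flipped}}$ computes $M_Q^T$; concretely, each block $B_{\alpha,z}$ is symmetric (as one reads off from its $2\times 2$ or $4\times 4$ nontrivial sub-block), so horizontally flipping $\mathcal{D}_Q$ merely reverses the order of the blocks, which is exactly the standard factorization of $M_Q^T$. Applying the block-by-block inverse recipe of Section 3.2 --- replace each $B_{\alpha_i,z_i}$ by $B_{\alpha_i,z_i}^{-1}$ and reverse the order of concatenation --- then yields a wiring diagram for $(M_Q^T)^{-1}=M_Q^{-T}$, and by construction this diagram is $(\mathcal{D}_Q^{\text{flipped}})^{-1}$. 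Running Lemma 1 through it and chaining with the previous step produces the corollary, up to the stated sign.

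The proof is essentially bookkeeping, and the only point requiring genuine care is the compatibility of the diagram-level operations (horizontal flip, block inversion) with their matrix-level counterparts on the rearranged factorization of $M_Q^T$. Since each operation is a local move on a single block $B_{\alpha,z}$ and block inversion is an intrinsic property of the block (independent of its position in the concatenation), this verification is immediate, so I do not anticipate any substantive obstacle.
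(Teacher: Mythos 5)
Your proposal is correct and follows exactly the route the paper intends (the paper gives no written proof, merely remarking ``Lemma 2 gives the following statement''). You correctly supply the two details that must be checked: that $\det M_Q=1$ so Lemma 2 applies (which also follows simply from $M_Q\in SO(2n+2)$), and that $(\mathcal{D}_Q^{\text{flipped}})^{-1}$ computes $M_Q^{-T}$, using the symmetry of each block $B_{\alpha,z}$ to identify the flipped diagram with the reversed factorization of $M_Q^T$.
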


\section{Stratification Preserving Chart Isomorphisms}
In this section
we identify the opposite Bruhat cells
on $SO(2n+2)/B$ that model the charts on
$\overline{PSO(2n)/SO(2n-1)}\cong \mathbb{P}^{2n-1}$,
construct the chart isomorphisms,
and show that they are 
stratification preserving.

First we define the following distinguished words.
For ease of notation, we use
numbers $0,\cdots,n$ to denote
the simple reflections
corresponding to the simple roots,
following the labeling in
figure .
Let  
\[Q_1=0(n-1)\cdots 32123\cdots (n-1)0=:[0(n-1)\searrow 1\nearrow (n-1)0],\]
\[Q_2=n(n-1)\cdots32123\cdots (n-1)n=:[n\searrow 1\nearrow n],\] and for all $2\le i\le n$,
let 
\begin{align*}
    Q_{2i-1}&=(n+1-i)(n-i)\cdots 12\cdots (n-1)0n(n-1)\cdots (n+2 -i)\\
    & =:[(n+1-i)\searrow 1\nearrow (n-1)0n\searrow (n+2-i)],
\end{align*} and 
$Q_{2i} $ the reverse of $Q_{2i-1}$.
For all $1\le i\le 2n$, let $w_i=\prod Q_i$.

 \begin{prop}
On $X_\circ^w$ where $w=\prod Q$, $Q=[0(n-1)\searrow 1\nearrow (n-1)0]$, the 	
defining ideal for $X_{r_\alpha}\cap X_\circ^w$ is
generated by\[
\begin{cases}
z_n & \text{  if }\alpha=1 \\
z_n+z_{n-1}z_{n+1}+\cdots z_{n-(\alpha-1)}z_{n+(\alpha-1)} &\text{  if }2\le \alpha\le n\\
z_n+z_{n-1}z_{n+1}+\cdots + z_1z_{2n-1} & \text{  if }\alpha=0.
\end{cases}
\]
 \end{prop}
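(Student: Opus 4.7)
The plan is to apply Fulton's theorem (Section 2.2) to each matrix Schubert variety $\overline{B_-^{GL}\widetilde{r_\alpha}B^{GL}}$, pull its defining equations back along the Bott-Samelson parametrization $m_{Q_1}$, and then compute the resulting minors of $M_{Q_1}$ combinatorially from the wiring diagram $\mathcal{D}_{Q_1}$ via Lemma 1. For $1\le\alpha\le n$, the permutation $\widetilde{r_\alpha}\in S_{2n+2}$ is the double transposition $(\alpha,\alpha+1)(2n+2-\alpha,2n+3-\alpha)$, whose essential set is $\{(\alpha,\alpha),(2n+2-\alpha,2n+2-\alpha)\}$ of ranks $\alpha-1$ and $2n+1-\alpha$. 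Corollary 1, together with the identity $M_{Q_1}^{-T}=J M_{Q_1} J$ for $M_{Q_1}\in SO(2n+2)$, identifies the northwest $\alpha\times\alpha$ determinant with the northwest $(2n+2-\alpha)\times(2n+2-\alpha)$ determinant up to sign, so it suffices to analyze $\det M_{Q_1}[1..\alpha,1..\alpha]$. For $\alpha=0$, $\widetilde{r_0}=(n,n+2)(n+1,n+3)$ has essential point $(n+1,n+1)$ of rank $n-1$, so the defining ideal is a priori generated by all $n\times n$ minors of the northwest $(n+1)\times(n+1)$ block.

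The next step is a structural path analysis in $\mathcal{D}_{Q_1}$. The palindrome $Q_1$ contains $r_1$ only at position $n$; for $2\le j\le n-1$ it contains $r_j$ at the symmetric positions $n-j+1$ and $n+j-1$; and $r_0$ appears at positions $1$ and $2n-1$. A row-by-row trace shows that within the first $n$ columns only four families of paths occur: $1\to 1$ with weight $-z_n$ (horizontal at the central $r_1$ block); $1\to j$ with weight $-z_{n+j-1}$ for $2\le j\le n$ (cross up at the center, propagate through $r_2,\ldots,r_{j-1}$, and take the horizontal at position $n+j-1$, which for $j=n$ lies in the terminal $r_0$ block); the mirror $j\to 1$ with weight $-z_{n-j+1}$; and the unit-weight $j\to j$ crossing up at position $n-j+1$ and back at $n+j-1$. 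Every other ending column in $\{1,\ldots,n\}$ is unreachable from any starting row in $\{1,\ldots,n\}$.

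Expanding $\det M_{Q_1}[1..k,1..k]$ via Lemma 1 for $2\le k\le n$, the only disjoint-path matchings with non-vanishing product are the identity matching (weight $-z_n$, sign $+1$) and the transpositions $\sigma=(1\ j)$ for $2\le j\le k$ (weight $(-z_{n-j+1})(-z_{n+j-1})$, sign $-1$); every other permutation is annihilated by the absence of a path between two rows $\ge 2$ ending at distinct columns $\ge 2$. Collecting contributions gives
\[
\det M_{Q_1}[1..k,1..k]=-\,\left(z_n+z_{n-1}z_{n+1}+\cdots+z_{n-k+1}z_{n+k-1}\right),
\]
which generates the stated principal ideal. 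The case $\alpha=1$ specializes to $(M_{Q_1})_{1,1}=-z_n$.

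The main obstacle is the $\alpha=0$ case. The northwest $n\times n$ minor (the $k=n$ specialization above) already equals the stated polynomial up to sign, so what remains is to verify that every other $n\times n$ minor of the $(n+1)\times(n+1)$ northwest block lies in this principal ideal on $X_\circ^w$. The plan is to combine two features of $Q_1$: the isotropy of the first $n+1$ columns of $M_{Q_1}\in SO(2n+2)$ with respect to the antidiagonal form, and the absence of $r_n$ from $Q_1$, which decouples row $n+1$ from rows $1,\ldots,n$ except through the two terminal $r_0$ blocks. The $n=2$ case illustrates the expected outcome: the northwest $3\times 3$ block is block-diagonal with blocks of sizes $2$ and $1$, and each of its non-zero $2\times 2$ minors is $-(z_n+z_{n-1}z_{n+1})$ times an explicit monomial. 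The general argument will proceed by an analogous structural factorization of the northwest $(n+1)\times(n+1)$ block, confirming that every $n\times n$ minor is a multiple of the northwest $n\times n$ one, after which principality and the identification of the generator follow at once.
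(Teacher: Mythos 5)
Your treatment of $1\le\alpha\le n$ is essentially the paper's argument, and correct. The paper passes from the $\alpha\times\alpha$ to the $(2n+2-\alpha)\times(2n+2-\alpha)$ minor by flipping the wiring diagram (Lemma~2, Corollary~1) and observing from the picture that $(\mathcal{D}_Q^{\text{flipped}})^{-1}$ is a vertical flip of $\mathcal{D}_Q$; you instead invoke $M^{-T}=JMJ$ for $M\in SO(2n+2)$, which gives the same conclusion a bit more conceptually. The path census for the northwest $\alpha\times\alpha$ minor (identity matching contributing $-z_n$, transpositions $(1\,j)$ contributing $-z_{n-j+1}z_{n+j-1}$, everything else dead) matches the paper.

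However, the $\alpha=0$ case is not proved in your proposal: you write ``The plan is to\ldots'' and ``The general argument will proceed by\ldots'' and verify only $n=2$. You have identified the right structural facts — the absence of $r_n$ in $Q_1$ forces the $(n+1)$th row and column of the northwest $(n+1)\times(n+1)$ block $A$ to vanish off the diagonal, and the northwest $n\times n$ minor equals $A_{n+1,n+1}$ — but you do not establish them for general $n$, nor do you conclude from them that every $n\times n$ minor of $A$ is divisible by $z_n+z_{n-1}z_{n+1}+\cdots+z_1z_{2n-1}$. That conclusion is immediate once those two facts are in hand (a size-$n$ minor either uses neither row $n+1$ nor column $n+1$, giving $\det A_{[n],[n]}$; uses exactly one of them, giving $0$; or uses both, giving $A_{n+1,n+1}$ times an $(n-1)$-minor), and this is exactly how the paper closes the argument. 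Also, your phrase ``decouples row $n+1$ from rows $1,\ldots,n$ except through the two terminal $r_0$ blocks'' overstates the remaining coupling: in this convention $r_0$ links $n\leftrightarrow n+2$ and $n+1\leftrightarrow n+3$, so within the first $n+1$ indices row $n+1$ is \emph{entirely} decoupled from rows $1,\ldots,n$ in the absence of $r_n$, which is what makes $A_{n+1,j}=A_{j,n+1}=0$ for $j\le n$ and renders the appeal to isotropy of the first $n+1$ columns unnecessary.
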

 \begin{proof}
 \ \\
 
\begin{tikzpicture}

\end{tikzpicture}
 
 \begin{center}
 \begin{figure}[h]
 \centering
 \subfloat[$\mathcal{D}_Q, Q=\lbrack 0(n-1) \searrow 1\nearrow (n-1)0\rbrack$]{
    \begin{tikzpicture}
\pgftransformcm{0.55}{0}{0}{0.37}{\pgfpoint{0cm}{0cm}}
\foreach \x in {-6,-5,...,7}{
      \foreach \y in {-7,-6,...,8}{
        \node[draw,circle,inner sep=0.5pt,fill] at (\x,\y) {}; } }
\draw  (-6,2) -- (-5,0) ;
\draw  (-6,0) -- (-5,2) ;
\draw  [dashed](-6,2) -- (-5,2) ;
\draw  (-6,-1) -- (-5,1) ;
\draw  (-6,1) -- (-5,-1) ;
\draw  [dotted](-6,1) -- (-5,1) ;
\foreach \y in {-6,-7,-5,-4,-3,-2,3,4,5,6,7,8}
{\draw (-6,\y) -- (-5,\y);}

\draw  (6,2) -- (7,0) ;
\draw  (6,0) -- (7,2) ;
\draw  [dashed](6,2) -- (7,2) ;
\draw  (6,-1) -- (7,1) ;
\draw  (6,1) -- (7,-1) ;
\draw  [dotted](6,1) -- (7,1) ;
\foreach \y in {-6,-7,-5,-4,-3,-2,3,4,5,6,7,8}
{\draw (6,\y) -- (7,\y);}

\draw  (-5,3) -- (-4,2) ;
\draw  (-5,2) -- (-4,3) ;
\draw  [dashed](-5,3) -- (-4,3) ;
\draw  (-5,-1) -- (-4,-2) ;
\draw  (-5,-2) -- (-4,-1) ;
\draw  [dotted](-5,-1) -- (-4,-1) ;
\foreach \y in {-6,-7,-5,-4,-3,0,1,4,5,6,7,8}
{\draw (-5,\y) -- (-4,\y);}

\draw  (5,3) -- (6,2) ;
\draw  (5,2) -- (6,3) ;
\draw  [dashed](5,3) -- (6,3) ;
\draw  (5,-1) -- (6,-2) ;
\draw  (5,-2) -- (6,-1) ;
\draw  [dotted](5,-1) -- (6,-1) ;
\foreach \y in {-6,-7,-5,-4,-3,0,1,4,5,6,7,8}
{\draw (5,\y) -- (6,\y);}

\draw  (-4,4) -- (-3,3) ;
\draw  (-4,3) -- (-3,4) ;
\draw  [dashed](-4,4) -- (-3,4) ;
\draw  (-4,-2) -- (-3,-3) ;
\draw  (-4,-3) -- (-3,-2) ;
\draw  [dotted](-4,-2) -- (-3,-2) ;
\foreach \y in {-6,-7,-5,-4,-1,0,1,2,5,6,7,8}
{\draw (-4,\y) -- (-3,\y);}

\draw  (4,4) -- (5,3) ;
\draw  (4,3) -- (5,4) ;
\draw  [dashed](4,4) -- (5,4) ;
\draw  (4,-2) -- (5,-3) ;
\draw  (4,-3) -- (5,-2) ;
\draw  [dotted](4,-2) -- (5,-2) ;
\foreach \y in {-6,-7,-5,-4,-1,0,1,2,5,6,7,8}
{\draw (4,\y) -- (5,\y);}

\draw  (-3,5) -- (-2,4) ;
\draw  (-3,4) -- (-2,5) ;
\draw  [dashed](-3,5) -- (-2,5) ;
\draw  (-3,-3) -- (-2,-4) ;
\draw  (-3,-4) -- (-2,-3) ;
\draw  [dotted](-3,-3) -- (-2,-3) ;
\foreach \y in {-6,-7,-5,-2,-1,0,1,2,3,6,7,8}
{\draw (-3,\y) -- (-2,\y);}

\draw  (3,5) -- (4,4) ;
\draw  (3,4) -- (4,5) ;
\draw  [dashed](3,5) -- (4,5) ;
\draw  (3,-3) -- (4,-4) ;
\draw  (3,-4) -- (4,-3) ;
\draw  [dotted](3,-3) -- (4,-3) ;
\foreach \y in {-6,-7,-5,-2,-1,0,1,2,3,6,7,8}
{\draw (3,\y) -- (4,\y);}

\draw  (-2,6) -- (-1,5) ;
\draw  (-2,5) -- (-1,6) ;
\draw  [dashed](-2,6) -- (-1,6) ;
\draw  (-2,-4) -- (-1,-5) ;
\draw  (-2,-5) -- (-1,-4) ;
\draw  [dotted](-2,-4) -- (-1,-4) ;
\foreach \y in {-6,-7,-3,-2,-1,0,1,2,3,4,7,8}
{\draw (-2,\y) -- (-1,\y);}

\draw  (2,6) -- (3,5) ;
\draw  (2,5) -- (3,6) ;
\draw  [dashed](2,6) -- (3,6) ;
\draw  (2,-4) -- (3,-5) ;
\draw  (2,-5) -- (3,-4) ;
\draw  [dotted](2,-4) -- (3,-4) ;
\foreach \y in {-6,-7,-3,-2,-1,0,1,2,3,4,7,8}
{\draw (2,\y) -- (3,\y);}

\draw  (-1,7) -- (0,6) ;
\draw  (-1,6) -- (0,7) ;
\draw  [dashed](-1,7) -- (0,7) ;
\draw  (-1,-5) -- (0,-6) ;
\draw  (-1,-6) -- (0,-5) ;
\draw  [dotted](-1,-5) -- (0,-5) ;
\foreach \y in {-7,-4,-3,-2,-1,0,1,2,3,4,5,8}
{\draw (-1,\y) -- (0,\y);}

\draw  (1,7) -- (2,6) ;
\draw  (1,6) -- (2,7) ;
\draw  [dashed](1,7) -- (2,7) ;
\draw  (1,-5) -- (2,-6) ;
\draw  (1,-6) -- (2,-5) ;
\draw  [dotted](1,-5) -- (2,-5) ;
\foreach \y in {-7,-4,-3,-2,-1,0,1,2,3,4,5,8}
{\draw (1,\y) -- (2,\y);}

\draw  (0,8) -- (1,7) ;
\draw  (0,7) -- (1,8) ;
\draw  [dashed](0,8) -- (1,8) ;
\draw  (0,-6) -- (1,-7) ;
\draw  (0,-7) -- (1,-6) ;
\draw  [dotted](0,-6) -- (1,-6) ;
\foreach \y in {-5,-4,-3,-2,-1,0,1,2,3,4,5,6}
{\draw (0,\y) -- (1,\y);}
  \end{tikzpicture} }
  \qquad
  \subfloat[$(\mathcal{D}_Q^{\text{flipped}})^{-1}$]{
  \begin{tikzpicture}
\pgftransformcm{0.55}{0}{0}{0.37}{\pgfpoint{0cm}{0cm}}
\foreach \x in {-6,-5,...,7}{
      \foreach \y in {-7,-6,...,8}{
        \node[draw,circle,inner sep=0.5pt,fill] at (\x,\y) {}; } }
\draw  (-6,2) -- (-5,0) ;
\draw  (-6,0) -- (-5,2) ;
\draw  [dotted](-6,0) -- (-5,0) ;
\draw  (-6,-1) -- (-5,1) ;
\draw  (-6,1) -- (-5,-1) ;
\draw  [dashed](-6,-1) -- (-5,-1) ;
\foreach \y in {-6,-7,-5,-4,-3,-2,3,4,5,6,7,8}
{\draw (-6,\y) -- (-5,\y);}

\draw  (6,2) -- (7,0) ;
\draw  (6,0) -- (7,2) ;
\draw  [dotted](6,0) -- (7,0) ;
\draw  (6,-1) -- (7,1) ;
\draw  (6,1) -- (7,-1) ;
\draw  [dashed](6,-1) -- (7,-1) ;
\foreach \y in {-6,-7,-5,-4,-3,-2,3,4,5,6,7,8}
{\draw (6,\y) -- (7,\y);}

\draw  (-5,3) -- (-4,2) ;
\draw  (-5,2) -- (-4,3) ;
\draw  [dotted](-5,2) -- (-4,2) ;
\draw  (-5,-1) -- (-4,-2) ;
\draw  (-5,-2) -- (-4,-1) ;
\draw  [dashed](-5,-2) -- (-4,-2) ;
\foreach \y in {-6,-7,-5,-4,-3,0,1,4,5,6,7,8}
{\draw (-5,\y) -- (-4,\y);}

\draw  (5,3) -- (6,2) ;
\draw  (5,2) -- (6,3) ;
\draw  [dotted](5,2) -- (6,2) ;
\draw  (5,-1) -- (6,-2) ;
\draw  (5,-2) -- (6,-1) ;
\draw  [dashed](5,-2) -- (6,-2) ;
\foreach \y in {-6,-7,-5,-4,-3,0,1,4,5,6,7,8}
{\draw (5,\y) -- (6,\y);}

\draw  (-4,4) -- (-3,3) ;
\draw  (-4,3) -- (-3,4) ;
\draw  [dotted](-4,3) -- (-3,3) ;
\draw  (-4,-2) -- (-3,-3) ;
\draw  (-4,-3) -- (-3,-2) ;
\draw  [dashed](-4,-3) -- (-3,-3) ;
\foreach \y in {-6,-7,-5,-4,-1,0,1,2,5,6,7,8}
{\draw (-4,\y) -- (-3,\y);}

\draw  (4,4) -- (5,3) ;
\draw  (4,3) -- (5,4) ;
\draw  [dotted](4,3) -- (5,3) ;
\draw  (4,-2) -- (5,-3) ;
\draw  (4,-3) -- (5,-2) ;
\draw  [dashed](4,-3) -- (5,-3) ;
\foreach \y in {-6,-7,-5,-4,-1,0,1,2,5,6,7,8}
{\draw (4,\y) -- (5,\y);}

\draw  (-3,5) -- (-2,4) ;
\draw  (-3,4) -- (-2,5) ;
\draw  [dotted](-3,4) -- (-2,4) ;
\draw  (-3,-3) -- (-2,-4) ;
\draw  (-3,-4) -- (-2,-3) ;
\draw  [dashed](-3,-4) -- (-2,-4) ;
\foreach \y in {-6,-7,-5,-2,-1,0,1,2,3,6,7,8}
{\draw (-3,\y) -- (-2,\y);}

\draw  (3,5) -- (4,4) ;
\draw  (3,4) -- (4,5) ;
\draw  [dotted](3,4) -- (4,4) ;
\draw  (3,-3) -- (4,-4) ;
\draw  (3,-4) -- (4,-3) ;
\draw  [dashed](3,-4) -- (4,-4) ;
\foreach \y in {-6,-7,-5,-2,-1,0,1,2,3,6,7,8}
{\draw (3,\y) -- (4,\y);}

\draw  (-2,6) -- (-1,5) ;
\draw  (-2,5) -- (-1,6) ;
\draw  [dotted](-2,5) -- (-1,5) ;
\draw  (-2,-4) -- (-1,-5) ;
\draw  (-2,-5) -- (-1,-4) ;
\draw  [dashed](-2,-5) -- (-1,-5) ;
\foreach \y in {-6,-7,-3,-2,-1,0,1,2,3,4,7,8}
{\draw (-2,\y) -- (-1,\y);}

\draw  (2,6) -- (3,5) ;
\draw  (2,5) -- (3,6) ;
\draw  [dotted](2,5) -- (3,5) ;
\draw  (2,-4) -- (3,-5) ;
\draw  (2,-5) -- (3,-4) ;
\draw  [dashed](2,-5) -- (3,-5) ;
\foreach \y in {-6,-7,-3,-2,-1,0,1,2,3,4,7,8}
{\draw (2,\y) -- (3,\y);}

\draw  (-1,7) -- (0,6) ;
\draw  (-1,6) -- (0,7) ;
\draw  [dotted](-1,6) -- (0,6) ;
\draw  (-1,-5) -- (0,-6) ;
\draw  (-1,-6) -- (0,-5) ;
\draw  [dashed](-1,-6) -- (0,-6) ;
\foreach \y in {-7,-4,-3,-2,-1,0,1,2,3,4,5,8}
{\draw (-1,\y) -- (0,\y);}

\draw  (1,7) -- (2,6) ;
\draw  (1,6) -- (2,7) ;
\draw  [dotted](1,6) -- (2,6) ;
\draw  (1,-5) -- (2,-6) ;
\draw  (1,-6) -- (2,-5) ;
\draw  [dashed](1,-6) -- (2,-6) ;
\foreach \y in {-7,-4,-3,-2,-1,0,1,2,3,4,5,8}
{\draw (1,\y) -- (2,\y);}

\draw  (0,8) -- (1,7) ;
\draw  (0,7) -- (1,8) ;
\draw  [dotted](0,7) -- (1,7) ;
\draw  (0,-6) -- (1,-7) ;
\draw  (0,-7) -- (1,-6) ;
\draw  [dashed](0,-7) -- (1,-7) ;
\foreach \y in {-5,-4,-3,-2,-1,0,1,2,3,4,5,6}
{\draw (0,\y) -- (1,\y);}
\end{tikzpicture}}
\caption{}
\label{wires1}
\end{figure}
\end{center}
 
 The  Fulton ideal $I_\alpha$
 of  the matrix Schubert variety
 $\overline{B^{GL}_-\widetilde{r_\alpha} B^{GL}}$ for $1\le \alpha \le n$
 is generated by the northwest $\alpha\times \alpha$
 and $(2(n+1)-\alpha)\times(2(n+1)-\alpha)$ minors. 
Pulling back these generators
via $m_Q$
to $X^w_\circ$ means computing
the northwest
$\alpha\times\alpha$ 
and $(2(n+1)-\alpha)\times(2(n+1)-\alpha)$ minors of
the matrix $M_Q=m_Q(z_1,\cdots,z_{2n-1})$. We denote the ``source''
vertices on the left  side of $\mathcal{D}_Q$ with
$\{1,2,\cdots, 2n+2\}$ and the ``target'' vertices on the right  side of
$\mathcal{D}_Q$ with $\{1',2',\cdots, (2n+2)'\}$. 
Figure 2(a) shows an example of the diagram.
 
 By Lemma 1, to compute the northwest $\alpha\times \alpha$ minor of
 $M_Q$ 
 we need to enumerate all possible ways to follow the wires
  from  $\{1,2,\cdots ,\alpha\}$
 to $\{1',2',\cdots,\alpha'\}$.
 We use the notation $j\leftrightarrow k'$ for the statement ``$j$ is 
 matched with $k'$ through a unique legal path from $j$ to $k'$.''

Notice that if $1\leftrightarrow 1'$, then $j\leftrightarrow j'$ has to be 
the case for all $2\le j\le \alpha$.
 This picks up the term $-z_n$, and there are no crossings. 
 If  $1\leftrightarrow k'$ for some $2\le k\le \alpha$, then
  $k\leftrightarrow 1'$ is forced, and $j\leftrightarrow j'$ for all $j\neq k$,
 $2\le j \le \alpha$.  
 The variables encountered are $z_{n-(k-1)}$ and $z_{n+(k-1)}$ and  
 there are $2k-3$ crossings in this matching, so
the term $-z_{n-(k-1)}z_{n+(k-1)}$ corresponds to this matching.

By Lemma 2,
 the northwest $(2(n+1)-\alpha)\times (2(n+1)-\alpha)$ minor of 
 $M_Q$ can be obtained up to sign by following the wires in the diagram
 $({\mathcal{D}_Q^{\text{flipped}}})^{-1}$ 
from  $\{2(n+1)-\alpha+1,\cdots, 2(n+1)\}$ to  $\{(2(n+1)-\alpha+1)',\cdots, 2(n+1)'\}$. 

Observe that by the structure of our diagram,
we land in the same situation as when we computed the 
$\alpha\times \alpha$ minor, only that the diagram flipped
vertically, as illustrated in Figure 2(b). Therefore the two minors may only differ by signs.
(They are in fact the same, but this detail does not matter in this
argument.) Hence, the 
ideal of $m_Q^{-1}(\overline{B^{GL}_-\widetilde{r_\alpha} B^{GL}})$ for $1\le \alpha \le n$
is generated by the
polynomial
$z_n+z_{n-1}z_{n+1}+\cdots z_{n-(\alpha-1)}z_{n+(\alpha-1)} $.

 We are left with the case when $\alpha=0$. The 
 pullback of the Fulton 
 generators given by $r_0$ are the $n\times n$ minors in the northwest
 $(n+1)\times (n+1)$ block $A$ of $M_Q$. 
From the diagram we see that the $(n+1)$th row and $(n+1)$th column of
$A$ are all 0 except the $(n+1,n+1)$-entry, which has value
$z_n+z_{n-1}z_{n+1}+\cdots + z_1z_{2n-1}$, since 
in a matching between $\{1,2,\cdots ,n+1\}$ and $\{1',2',\cdots, (n+1)'\}$, $(n+1)$ must be
matched with $(n+1)'$. We have already shown
that the northwest $n\times n$ minor is also $z_n+z_{n-1}z_{n+1}+\cdots + z_1z_{2n-1}$. From this we see that all $n\times n$ minors of $A$ are
divisible by $z_n+z_{n-1}z_{n+1}+\cdots + z_1z_{2n-1}$. Therefore the ideal
of $m_Q^{-1}(\overline{B^{GL}_-\widetilde{r_0} B^{GL}})$
is a principal ideal generated by $z_n+z_{n-1}z_{n+1}+\cdots + z_1z_{2n-1}$.
 
 \end{proof}

  \begin{prop}
On $X_\circ^w$ where $w=\prod Q$, $Q=[n\searrow 1\nearrow n]$, the 	
defining ideal for $X_{r_\alpha}\cap X_\circ^w$ is
generated by \[
\begin{cases}
z_n & \text{  if }\alpha=1 \\
z_n+z_{n-1}z_{n+1}+\cdots z_{n-(\alpha-1)}z_{n+(\alpha-1)} &\text{  if }2\le \alpha\le n\\
1 & \text{  if }\alpha=0.
\end{cases}
\]
 \end{prop}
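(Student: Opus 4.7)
The plan is to mirror the proof of Proposition 1: pull back Fulton's generators for the relevant matrix Schubert ideals via $m_{Q_2}$ and evaluate the resulting minors of $M_{Q_2}$ using the wiring diagram $\mathcal{D}_{Q_2}$ via Lemma 1. Because the inner portions of $\mathcal{D}_{Q_2}$ and $\mathcal{D}_{Q_1}$ are identical, only the behavior at the two outermost columns changes: $r_n$ (a pair of ordinary crossings at positions $(n,n+1)$ and $(n+2,n+3)$) replaces $r_0$ (the central double-shift).

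For $1\le \alpha\le n$, the wiring-diagram analysis carries over essentially verbatim. Any matching of $\{1,\dots,\alpha\}$ to $\{1',\dots,\alpha'\}$ in $\mathcal{D}_{Q_2}$ is of one of two types: the straight matching $j\leftrightarrow j'$, contributing $-z_n$ from the central $r_1$ while other strands round-trip with weight $1$; or a swap $1\leftrightarrow k'$, $k\leftrightarrow 1'$ for some $2\le k\le\alpha$, contributing $-z_{n-(k-1)}z_{n+(k-1)}$. The relevant paths only visit the outermost columns at positions $\le n$, and the local stay-weight at position $n$ is the same ($-z_{2n-1}$) for $r_n$ as it was for $r_0$; so the outer substitution does not affect the minor. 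Lemma 2 (via Corollary 1) handles the complementary $(2(n+1)-\alpha)\times(2(n+1)-\alpha)$ minor and yields the same principal ideal.

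For $\alpha=0$, I need to show that the $n\times n$ minors of the northwest $(n+1)\times(n+1)$ block of $M_{Q_2}$ generate the unit ideal. My plan is to extract the block entry-by-entry from $\mathcal{D}_{Q_2}$: strand $n+1$ is trapped into a unique path descending to position $1$ (at each $r_k$ for $k=n,n-1,\dots,1$ the position $k+1$ admits no self-loop, forcing the crossing), so the last row is $(1,0,\dots,0)$; for $2\le k\le n$ strand $k$ admits exactly two paths, either staying at $r_k$-descending with weight $-z_{n-k+1}$ and being forced down to $1$, or crossing at $r_k$-descending, riding along position $k+1$ through the middle block (untouched by $r_j$ with $j<k$), and crossing back at $r_k$-ascending to position $k$ with weight $1$; and strand $1$ reaches position $m$ for $1\le m\le n+1$ by crossing at $r_1, r_2, \dots, r_{m-1}$-ascending and then staying at $r_m$-ascending with weight $-z_{n+m-1}$ (or, for $m=n+1$, making a final cross at $r_n$-ascending with weight $1$). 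In particular $(1,n+1)=1$ and $(n+1,n+1)=0$. The $n\times n$ minor obtained by deleting row $n+1$ and column $1$ then has a unique supporting matching, namely strand $1\to (n+1)'$ and strand $k\to k'$ for $2\le k\le n$, with total weight $\pm 1$. This produces the unit $\pm 1$ as a generator, so the ideal is $(1)$.

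The main obstacle is verifying the exhaustiveness of the two-path analysis for each strand $k$. This rests on two structural facts: at any $r_k$-block a strand at position $k+1$ has no self-loop and must cross back, and the reflections $r_j$ with $j<k$ do not touch position $k+1$. Together these confine strand $k$ to either the forced descent to position $1$ or the single cross-and-return excursion, which is what makes the row of the block for $2\le k\le n$ have only two nonzero entries and gives the clean unit determinant.
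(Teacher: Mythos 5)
Your proposal is correct and takes essentially the same approach as the paper: for $1\le\alpha\le n$ you reuse the Proposition~2 wiring-diagram analysis (the paper simply asserts this carries over), and for $\alpha=0$ both you and the paper identify the $n\times n$ minor with rows $\{1,\dots,n\}$ and columns $\{2',\dots,(n+1)'\}$ of the northwest block as having a single supporting matching of weight $\pm1$, hence a unit. The only difference is that you supply more of the strand-by-strand justification that the paper leaves implicit.
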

 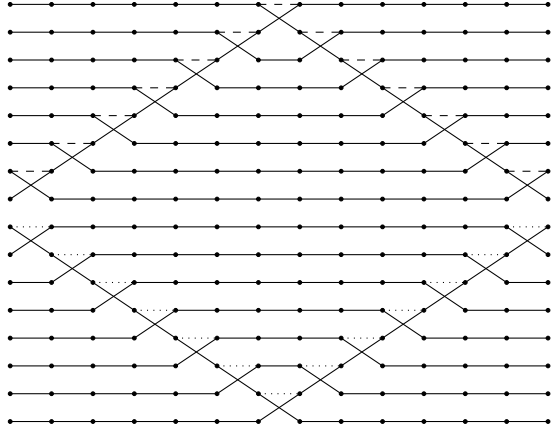
\begin{figure}[h]
     \centering
     
 \begin{tikzpicture}
\pgftransformcm{0.55}{0}{0}{0.37}{\pgfpoint{0cm}{0cm}}
\foreach \x in {-6,-5,...,7}{
      \foreach \y in {-7,-6,...,8}{
        \node[draw,circle,inner sep=0.5pt,fill] at (\x,\y) {}; } }
\draw  (-6,2) -- (-5,1) ;
\draw  (-6,1) -- (-5,2) ;
\draw  [dashed](-6,2) -- (-5,2) ;
\draw  (-6,-1) -- (-5,0) ;
\draw  (-6,0) -- (-5,-1) ;
\draw  [dotted](-6,0) -- (-5,0) ;
\foreach \y in {-6,-7,-5,-4,-3,-2,3,4,5,6,7,8}
{\draw (-6,\y) -- (-5,\y);}

\draw  (6,2) -- (7,1) ;
\draw  (6,1) -- (7,2) ;
\draw  [dashed](6,2) -- (7,2) ;
\draw  (6,-1) -- (7,0) ;
\draw  (6,0) -- (7,-1) ;
\draw  [dotted](6,0) -- (7,0) ;
\foreach \y in {-6,-7,-5,-4,-3,-2,3,4,5,6,7,8}
{\draw (6,\y) -- (7,\y);}

\draw  (-5,3) -- (-4,2) ;
\draw  (-5,2) -- (-4,3) ;
\draw  [dashed](-5,3) -- (-4,3) ;
\draw  (-5,-1) -- (-4,-2) ;
\draw  (-5,-2) -- (-4,-1) ;
\draw  [dotted](-5,-1) -- (-4,-1) ;
\foreach \y in {-6,-7,-5,-4,-3,0,1,4,5,6,7,8}
{\draw (-5,\y) -- (-4,\y);}

\draw  (5,3) -- (6,2) ;
\draw  (5,2) -- (6,3) ;
\draw  [dashed](5,3) -- (6,3) ;
\draw  (5,-1) -- (6,-2) ;
\draw  (5,-2) -- (6,-1) ;
\draw  [dotted](5,-1) -- (6,-1) ;
\foreach \y in {-6,-7,-5,-4,-3,0,1,4,5,6,7,8}
{\draw (5,\y) -- (6,\y);}

\draw  (-4,4) -- (-3,3) ;
\draw  (-4,3) -- (-3,4) ;
\draw  [dashed](-4,4) -- (-3,4) ;
\draw  (-4,-2) -- (-3,-3) ;
\draw  (-4,-3) -- (-3,-2) ;
\draw  [dotted](-4,-2) -- (-3,-2) ;
\foreach \y in {-6,-7,-5,-4,-1,0,1,2,5,6,7,8}
{\draw (-4,\y) -- (-3,\y);}

\draw  (4,4) -- (5,3) ;
\draw  (4,3) -- (5,4) ;
\draw  [dashed](4,4) -- (5,4) ;
\draw  (4,-2) -- (5,-3) ;
\draw  (4,-3) -- (5,-2) ;
\draw  [dotted](4,-2) -- (5,-2) ;
\foreach \y in {-6,-7,-5,-4,-1,0,1,2,5,6,7,8}
{\draw (4,\y) -- (5,\y);}

\draw  (-3,5) -- (-2,4) ;
\draw  (-3,4) -- (-2,5) ;
\draw  [dashed](-3,5) -- (-2,5) ;
\draw  (-3,-3) -- (-2,-4) ;
\draw  (-3,-4) -- (-2,-3) ;
\draw  [dotted](-3,-3) -- (-2,-3) ;
\foreach \y in {-6,-7,-5,-2,-1,0,1,2,3,6,7,8}
{\draw (-3,\y) -- (-2,\y);}

\draw  (3,5) -- (4,4) ;
\draw  (3,4) -- (4,5) ;
\draw  [dashed](3,5) -- (4,5) ;
\draw  (3,-3) -- (4,-4) ;
\draw  (3,-4) -- (4,-3) ;
\draw  [dotted](3,-3) -- (4,-3) ;
\foreach \y in {-6,-7,-5,-2,-1,0,1,2,3,6,7,8}
{\draw (3,\y) -- (4,\y);}

\draw  (-2,6) -- (-1,5) ;
\draw  (-2,5) -- (-1,6) ;
\draw  [dashed](-2,6) -- (-1,6) ;
\draw  (-2,-4) -- (-1,-5) ;
\draw  (-2,-5) -- (-1,-4) ;
\draw  [dotted](-2,-4) -- (-1,-4) ;
\foreach \y in {-6,-7,-3,-2,-1,0,1,2,3,4,7,8}
{\draw (-2,\y) -- (-1,\y);}

\draw  (2,6) -- (3,5) ;
\draw  (2,5) -- (3,6) ;
\draw  [dashed](2,6) -- (3,6) ;
\draw  (2,-4) -- (3,-5) ;
\draw  (2,-5) -- (3,-4) ;
\draw  [dotted](2,-4) -- (3,-4) ;
\foreach \y in {-6,-7,-3,-2,-1,0,1,2,3,4,7,8}
{\draw (2,\y) -- (3,\y);}

\draw  (-1,7) -- (0,6) ;
\draw  (-1,6) -- (0,7) ;
\draw  [dashed](-1,7) -- (0,7) ;
\draw  (-1,-5) -- (0,-6) ;
\draw  (-1,-6) -- (0,-5) ;
\draw  [dotted](-1,-5) -- (0,-5) ;
\foreach \y in {-7,-4,-3,-2,-1,0,1,2,3,4,5,8}
{\draw (-1,\y) -- (0,\y);}

\draw  (1,7) -- (2,6) ;
\draw  (1,6) -- (2,7) ;
\draw  [dashed](1,7) -- (2,7) ;
\draw  (1,-5) -- (2,-6) ;
\draw  (1,-6) -- (2,-5) ;
\draw  [dotted](1,-5) -- (2,-5) ;
\foreach \y in {-7,-4,-3,-2,-1,0,1,2,3,4,5,8}
{\draw (1,\y) -- (2,\y);}

\draw  (0,8) -- (1,7) ;
\draw  (0,7) -- (1,8) ;
\draw  [dashed](0,8) -- (1,8) ;
\draw  (0,-6) -- (1,-7) ;
\draw  (0,-7) -- (1,-6) ;
\draw  [dotted](0,-6) -- (1,-6) ;
\foreach \y in {-5,-4,-3,-2,-1,0,1,2,3,4,5,6}
{\draw (0,\y) -- (1,\y);}
\end{tikzpicture}
\caption{$\mathcal{D}_Q,  Q=\lbrack n \searrow 1\nearrow n\rbrack$}
 \end{figure}
\begin{proof}
The argument is exactly the same as in the previous case for
$1\le \alpha\le n$. When $\alpha=0$, since there is exactly one possible
matching of $\{1,2,\cdots n\}$ with $\{2',3',\cdots, (n+1)'\}$ and all edges in this matching
have weight 1, the ideal
of $m_Q^{-1}(\overline{B^{GL}_-\widetilde{r_0} B^{GL}})$
in this case is generated by 1.
Figure 3 shows an illustration for this 
case.
\end{proof}
 \begin{prop}
 On $X_\circ^w$ where $w=\prod Q$, $Q=[i\searrow 1\nearrow(n-1)0n\searrow (i+1)]$,
 $i\le n-1$, the defining ideal for $X_{r_\alpha}\cap X_\circ^w$ is generated by
 \[
\begin{cases}
z_i & \text{if }\alpha = 1\\
z_i+z_{i-1}z_{i+1}+\cdots +z_{i-(\alpha-1)}z_{i+(\alpha-1)} & \text{if }2\le \alpha \le i \\
\sum_{k=1}^{n+1-\alpha} z_{n+i-k}z_{(n+i)+(k-1)} &\text{if } i+1\le \alpha \le n \\
z_{n+i-1} &\text{if }\alpha = 0
\end{cases} 
 \]
 \end{prop}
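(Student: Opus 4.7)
The plan is to follow the template established in Propositions 1 and 2: construct the wiring diagram $\mathcal{D}_Q$ for $Q=[i\searrow 1\nearrow (n-1)0n\searrow (i+1)]$, pull back Fulton's generators of the ideal of $\overline{B_-^{GL}\widetilde{r_\alpha}B^{GL}}$ through the Bott-Samelson parametrization $m_Q$, and enumerate the requisite minors using Lemma 1. For $1\le\alpha\le n$ the relevant minors are the northwest $\alpha\times\alpha$ and $(2(n+1)-\alpha)\times(2(n+1)-\alpha)$ determinants of $M_Q$; for $\alpha=0$ they are the $n\times n$ minors of the northwest $(n+1)\times(n+1)$ block. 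Under the labelling of $Q$, the Bott--Samelson coordinate $z_i$ sits at the unique $r_1$; the variables $z_{i-k}$ and $z_{i+k}$ (for $1\le k\le i-1$) sit at the two occurrences of $r_{k+1}$ flanking $r_1$; $z_{n+i-1}$ sits at $r_0$; $z_{n+i}$ sits at $r_n$; and $z_{n+i+\ell}$ (for $1\le\ell\le n-1-i$) sits at the second occurrence of $r_{n-\ell}$ in the descending tail.

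For $\alpha=1$, wire $1$ meets only $r_1$ at position $i$, so the $(1,1)$-entry of $M_Q$ equals $\pm z_i$. For $2\le\alpha\le i$, the analysis is exactly that of Proposition 1 localized in the leftmost $\searrow\!\nearrow$ block of $\mathcal{D}_Q$: the identity matching contributes $\pm z_i$, and for each $2\le k\le\alpha$ the unique matching swapping $1\leftrightarrow k'$ and $k\leftrightarrow 1'$ (with $j\leftrightarrow j'$ otherwise) contributes $\pm z_{i-(k-1)}z_{i+(k-1)}$. In both ranges, Corollary 1 applied to $(\mathcal{D}_Q^{\text{flipped}})^{-1}$ shows that the dual $(2(n+1)-\alpha)$-minor agrees with the same polynomial up to sign, confirming that the pullback ideal is principal with the stated generator.

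The main new case, and the main obstacle, is $i+1\le\alpha\le n$. Here at least one wire in $\{1,\dots,\alpha\}$ must reach a right-vertex with index $\ge i+1$, so every surviving matching must traverse the central $0n$ substring that couples the two halves of the diagram. The proposed approach is to show that the nonvanishing disjoint matchings of $\{1,\dots,\alpha\}$ with $\{1',\dots,\alpha'\}$ are parametrized by an index $k\in\{1,\dots,n+1-\alpha\}$: the $k=1$ matching is the unique one that threads a wire through the central $r_0$ block followed by the $r_n$ block, contributing $\pm z_{n+i-1}z_{n+i}$; for each $k\ge 2$, the matching pairs the ascending-side occurrence of $r_{n-k+1}$ with its descending-tail counterpart, contributing $\pm z_{n+i-k}z_{(n+i)+(k-1)}$. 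Every other disposition of wires either collides or cancels in pairs by the LGV sign rule. Summing yields the stated polynomial, and Corollary 1 again gives principality via the dual minor. For $\alpha=0$, the analogue of the corresponding argument in Proposition 1 applies: wire $n+1$ is forced to pick up the factor $\pm z_{n+i-1}$ as it passes through the $r_0$ block, and this factor divides every $n\times n$ minor of the northwest $(n+1)\times(n+1)$ block, giving a principal ideal with the claimed generator.

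The technical heart of the proof is thus the combinatorial classification of middle-crossing matchings in the case $i+1\le\alpha\le n$. Unlike in Propositions 1 and 2, here the $0n$ substring genuinely links the two halves of the wiring diagram, and wires may enter the forked central block from the ascending side and leave to the descending side. Identifying precisely which disjoint matchings contribute nonzero terms, indexing them by $k$, and checking that the unwanted contributions cancel via disjointness together with the LGV sign rule is the primary challenge; once this bijection is in hand, the remaining verifications parallel those of Propositions 1 and 2.
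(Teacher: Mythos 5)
Your overall strategy matches the paper's: construct $\mathcal{D}_Q$, pull back Fulton's generators through $m_Q$, enumerate minors via LGV, and invoke Corollary~1 to reduce the $(2(n+1)-\alpha)$-minor to the $\alpha$-minor. Your identification of which $z_j$ sits at each letter of $Q$, and your parametrization of the contributing matchings by $k\in\{1,\dots,n+1-\alpha\}$ in the case $i+1\le\alpha\le n$, both agree with the paper. One small difference: the paper shows that, once $i+1\leftrightarrow 1'$ and the remaining $j\leftrightarrow j'$ pairings are forced, the only free pair is $(1,(i+1)')$ with exactly $n+1-\alpha$ paths, so there is no need to appeal to sign cancellation among unwanted matchings --- they simply do not occur. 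Your ``cancels in pairs by the LGV sign rule'' is weaker than necessary, though harmless.

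The genuine gap is in the $\alpha=0$ case. You assert that ``wire $n+1$ is forced to pick up the factor $\pm z_{n+i-1}$ as it passes through the $r_0$ block,'' calling this an analogue of the Proposition~1 argument. But a given $n\times n$ minor of the northwest $(n+1)\times(n+1)$ block need not use row $n+1$ at all (one row is discarded), so your forcing statement does not cover every minor. Moreover, the analogy to Proposition~1 breaks down: there the $(n+1)$th row \emph{and} column of the block are zero off the $(n+1,n+1)$ entry, which immediately pins down all minors; here row $n+1$ has its unique nonzero entry in a different column (namely $i+1$), and column $n+1$ is not sparse, so the row/column expansion trick alone does not handle the northwest $n\times n$ minor. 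The paper's argument is different and works uniformly: it fixes attention on the $r_0$ column of the wiring diagram at position $n+i-1$ and observes that, if neither the dashed nor the dotted edge (both of weight $\pm z_{n+i-1}$) is traversed, then only $n-1$ of the edges in that column can lie on a source--to--target path with both endpoints in $\{1,\dots,n+1\}$ and $\{1',\dots,(n+1)'\}$ (the crossing edges at rows $n,n{+}1,n{+}2,n{+}3$ carry wires off to rows $\ge n+2$ from which they cannot return), whereas $n$ disjoint paths require $n$ such edges. Hence every $n\times n$ minor of the block is divisible by $z_{n+i-1}$. You would need to replace your ``wire $n+1$'' claim with this column-counting argument (or separately verify divisibility of the northwest $n\times n$ minor, which the paper's $\alpha=n$ computation already supplies) to close the gap.
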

 
 \begin{proof}

 Let $M_Q=m_Q(z_1,\cdots, z_{2n-1})$. 
 Figure 4 shows an example of the
 diagram.
 By the same argument on the diagram
 as in proposition 1, we notice that for $1\le \alpha \le n$ the
 northwest $\alpha\times\alpha$ minor and the northwest  
 $(2(n+1)-\alpha)\times(2(n+1)-\alpha)$ minor 
 in $M_Q$ are the same up to sign,
 so to compute pullbacks of the Fulton generators
 we only need to consider the 
 $\alpha\times\alpha$ minors. 

 \begin{figure}[h]
     \centering
     
\begin{tikzpicture}
\pgftransformcm{0.55}{0}{0}{0.37}{\pgfpoint{0cm}{0cm}}
\tikzstyle{selected edge} = [draw,line width=2pt,-,red!50]

\foreach \x in {-6,-5,...,7}{
      \foreach \y in {-7,-6,...,8}{
        \node[draw,circle,inner sep=0.5pt,fill] at (\x,\y) {}; } }
\draw[selected edge]  (-6,5)node[left]{\scriptsize $i+1$} -- (-3,8)--(7,8) ;

\draw[selected edge]  (-6,7) -- (-5,7)--(-4,6)--(-3,6)--(-2,7)--(7,7) ;
\draw[selected edge]  (-6,6) node[left]{\scriptsize $i$}-- (-5,5)--(-2,5)--(-1,6)--(7,6);
\draw[selected edge]  (-6,4)--(-1,4)--(0,5)--(6,5)--(7,4);
\draw  (-6,5) -- (-5,6) ;
\draw  (-6,6) -- (-5,5) ;
\draw  [dashed](-6,6) -- (-5,6) ;
\draw  (-6,-4) -- (-5,-5) ;
\draw  (-6,-5) -- (-5,-4) ;
\draw  [dotted](-6,-4) -- (-5,-4) ;
\foreach \y in {-6,-7,-3,-2,-1,0,1,2,3,4,7,8}
{\draw (-6,\y) -- (-5,\y);}

\draw  (-5,6) -- (-4,7) ;
\draw  (-5,7) -- (-4,6) ;
\draw  [dashed](-5,7) -- (-4,7) ;
\draw  (-5,-5) -- (-4,-6) ;
\draw  (-5,-6) -- (-4,-5) ;
\draw  [dotted](-5,-5) -- (-4,-5) ;
\foreach \y in {-7,-4,-3,-2,-1,0,1,2,3,4,5,8}
{\draw (-5,\y) -- (-4,\y);}

\draw  (-4,7) -- (-3,8) ;
\draw  (-4,8) -- (-3,7) ;
\draw  [dashed](-5,8) -- (-3,8) ;
\draw  (-4,-6) -- (-3,-7) ;
\draw  (-4,-7) -- (-3,-6) ;
\draw  [dotted](-4,-6) -- (-3,-6) ;
\foreach \y in {-5,-4,-3,-2,-1,0,1,2,3,4,5,6}
{\draw (-4,\y) -- (-3,\y);}

\draw  (-3,6) -- (-2,7) ;
\draw  (-3,7) -- (-2,6) ;
\draw  [dashed](-3,7) -- (-2,7) ;
\draw  (-3,-5) -- (-2,-6) ;
\draw  (-3,-6) -- (-2,-5) ;
\draw  [dotted](-3,-5) -- (-2,-5) ;
\foreach \y in {-7,-4,-3,-2,-1,0,1,2,3,4,5,8}
{\draw (-3,\y) -- (-2,\y);}

\draw  (-2,5) -- (-1,6) ;
\draw  (-2,6) -- (-1,5) ;
\draw  [dashed](-2,6) -- (-1,6) ;
\draw  (-2,-4) -- (-1,-5) ;
\draw  (-2,-5) -- (-1,-4) ;
\draw  [dotted](-2,-4) -- (-1,-4) ;
\foreach \y in {-6,-7,-3,-2,-1,0,1,2,3,4,7,8}
{\draw (-2,\y) -- (-1,\y);}

\draw  (-1,4) -- (0,5) ;
\draw  (-1,5) -- (0,4) ;
\draw  [dashed](-1,5) -- (0,5) ;
\draw  (-1,-3) -- (0,-4) ;
\draw  (-1,-4) -- (0,-3) ;
\draw  [dotted](-1,-3) -- (0,-3) ;
\foreach \y in {-6,-7,-5,-2,-1,0,1,2,3,6,7,8}
{\draw (-1,\y) -- (0,\y);}

\draw  (0,3) -- (1,4) ;
\draw  (0,4) -- (1,3) ;
\draw  [dashed](0,4) -- (1,4) ;
\draw  (0,-2) -- (1,-3) ;
\draw  (0,-3) -- (1,-2) ;
\draw  [dotted](0,-2) -- (1,-2) ;
\foreach \y in {-6,-7,-5,-4,-1,0,1,2,5,6,7,8}
{\draw (0,\y) -- (1,\y);}

\draw  (1,2) -- (2,3) ;
\draw  (1,3) -- (2,2) ;
\draw  [dashed](1,3) -- (2,3) ;
\draw  (1,-1) -- (2,-2) ;
\draw  (1,-2) -- (2,-1) ;
\draw  [dotted](1,-1) -- (2,-1) ;
\foreach \y in {-6,-7,-5,-4,-3,0,1,4,5,6,7,8}
{\draw (1,\y) -- (2,\y);}

\draw  (2,2) -- (3,0) ;
\draw  (2,0) -- (3,2) ;
\draw  [dashed](2,2) -- (3,2) ;
\draw  (2,-1) -- (3,1) ;
\draw  (2,1) -- (3,-1) ;
\draw  [dotted](2,1) -- (3,1) ;
\foreach \y in {-6,-7,-5,-4,-3,-2,3,4,5,6,7,8}
{\draw (2,\y) -- (3,\y);}

\draw  (3,2) -- (4,1) ;
\draw  (3,1) -- (4,2) ;
\draw  [dashed](3,2) -- (4,2) ;
\draw  (3,-1) -- (4,0) ;
\draw  (3,0) -- (4,-1) ;
\draw  [dotted](3,0) -- (4,0) ;
\foreach \y in {-6,-7,-5,-4,-3,-2,3,4,5,6,7,8}
{\draw (3,\y) -- (4,\y);}

\draw  (4,2) -- (5,3) ;
\draw  (4,3) -- (5,2) ;
\draw  [dashed](4,3) -- (5,3) ;
\draw  (4,-1) -- (5,-2) ;
\draw  (4,-2) -- (5,-1) ;
\draw  [dotted](4,-1) -- (5,-1) ;
\foreach \y in {-6,-7,-5,-4,-3,0,1,4,5,6,7,8}
{\draw (4,\y) -- (5,\y);}

\draw  (5,3) -- (6,4) ;
\draw  (5,4) -- (6,3) ;
\draw  [dashed](5,4) -- (6,4) ;
\draw  (5,-2) -- (6,-3) ;
\draw  (5,-3) -- (6,-2) ;
\draw  [dotted](5,-2) -- (6,-2) ;
\foreach \y in {-6,-7,-5,-4,-1,0,1,2,5,6,7,8}
{\draw (5,\y) -- (6,\y);}

\draw  (6,4) -- (7,5) ;
\draw  (6,5) -- (7,4) ;
\draw  [dashed](6,5) -- (7,5) ;
\draw  (6,-3) -- (7,-4) ;
\draw  (6,-4) -- (7,-3) ;
\draw  [dotted](6,-3) -- (7,-3) ;
\foreach \y in {-6,-7,-5,-2,-1,0,1,2,3,6,7,8}
{\draw (6,\y) -- (7,\y);}
\end{tikzpicture}
\caption{$\mathcal{D}_Q, Q=[i\searrow 1\nearrow(n-1)0n\searrow (i+1)]$}
\end{figure}
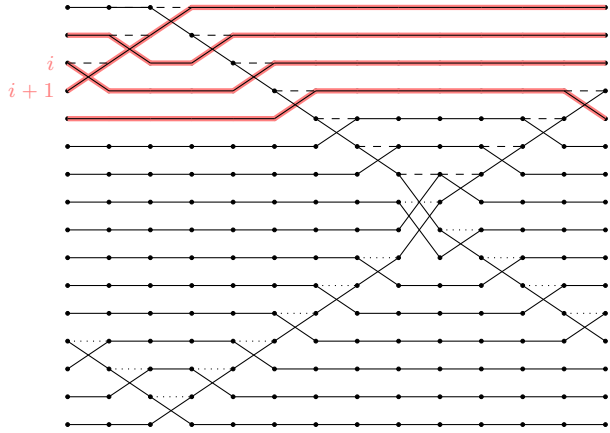

 For $1\le \alpha \le i$ the part of the diagram that is relevant has
 the same structure as in Proposition 1, and the $\alpha\times\alpha$ minor 
 is $-z_i$ when $\alpha=1$ and 
 $-z_i-z_{i-1}z_{i+1}-\cdots -z_{i-(\alpha-1)}z_{i+(\alpha-1)} $ when
 $2\le \alpha \le i$.

 Now suppose $i+1\le \alpha \le n$. Notice that $i+1\leftrightarrow 1'$ is 
 forced. For $1\le j\le i$, there is one path to $j'$ and another path to $1'$.
  Therefore to proceed legally we have $j\leftrightarrow j'$ for $1\le j\le i$.
Now for all $j$ where  $i+1<j\le \alpha$, 
 we also have $j\lra j'$, since $j$ can be matched with $j'$ or $(i+1)'$,
 but in the latter case $j'$ cannot be matched.
 The colored edges in Figure 4 illustrates
 an example of a forced configuration.
 The only pair left
 is $1$ and $(i+1)'$, and there are $n+1-\alpha$ such paths, each 
 picking up a term $z_{n+i-k}z_{(n+i)+(k-1)}$ for some $1\le k\le n+1-\alpha$.
 Notice also from the diagram that the number of crossings we encounter
 is always odd. Therefore, the $\alpha\times\alpha$ minor is
 $-\sum_{k=1}^{n+1-\alpha} z_{n+i-k}z_{(n+i)+(k-1)} $.  

Finally, when $\alpha=0$, we claim that the 
ideal generated by pullbacks of the Fulton
generators is generated by $z_{n+i-1}$.
First observe that all $n\times n$ minors
in the northwest $(n+1)\times (n+1)$ blocks
are divisible by $z_{n+i-1}$. To see this,
for each minor
 consider the $(n+i-1)$th column
that corresponds to $r_0$. If neither of 
the edge whose weight is $\pm z_{n+i-1}$ is
used, then only $n-1$ edges in that column
are connected to both
$\{1,\cdots ,n+1\}$ and $\{1,\cdots,(n+1')\}$.
Therefore one of these (dotted/dashed) edges must be used. Observe also that the minor that is
computed by
following $\{2,\cdots,n+1\}$
to $\{1',\cdots,n'\}$ is exactly $z_{n+i-1}$.
The claim then follows.
  \end{proof}
 On $\overline{PSO(2n)/SO(2n-1)}\cong \mathbb{P}^{2n-1}$, following the coordinatization convention 
on $\mathbb{P}^{2n-1}$
set in Section 1,
coordinatize each 
 standard affine chart $U_l$ so that
$U_l\hookrightarrow\mathbb{P}^{2n-1}$,
$(u_1,u_2,\cdots, \hat{u_l},\cdots, u_{2n})\mapsto
[u_1:u_3:\cdots :1:\cdots:u_{2n-2}: u_{2n}]$
where $\hat{u_l}$
means omitting the coordinate $u_l$.

\begin{prop}
The isomorphisms  $c_l:U_l\to X_\circ^{w_l}$ explicitly given as follows
preserve the stratification on $\mathbb{P}^{2n-1}$
defined in Section 1 and the stratification
of the corresponding opposite Bruhat cells
stratified by Schubert varieties
:
$$c_1(u_2,\cdots, u_{2n})=(u_3,u_5,\cdots,u_{2n-1},u_2+u_3u_4+\cdots +u_{2n-1}u_{2n},-u_{2n},-u_{2n-2},\cdots, -u_4)$$
 (or, $c_1(u_2,\cdots, u_{2n})=(z_1,\cdots,z_{2n-1})$,
 where 
 $z_n=u_2+\sum_{i=1}^{n-1}u_{2i+1}u_{2i+2}$,
 $z_j=u_{2j+1}$ for $1\le j\le n-1$, and $z_j=-u_{2(2n+1-j)}$),
 \[c_2(u_1,u_3\cdots, u_{2n})=(u_3,u_5,\cdots,u_{2n-1},u_1+u_3u_4+\cdots +u_{2n-1}u_{2n},-u_{2n},-u_{2n-2},\cdots, -u_4),\]
 and for $2\le i\le n$,
 \vspace{-.3cm}
 \begin{align*}
 c_{2i-1}(u_1,\cdots,\widehat{u_{2i-1}},\cdots, u_{2n})=(u_{2i+1},u_{2i+3},\cdots , u_{2n-1}, u_{2i} 
 +\sum_{j=1,j\neq i}^nu_{2j-1}u_{2j},\\ -u_{2n},-u_{2n-2},
 \cdots, -u_{2i+2}, u_{2i-3}, u_{2i-5},\cdots, u_1, u_2, u_4, \cdots, u_{2i-2}  )
 \end{align*}
(or, 
 $c_{2i-1}(u_1,\cdots,\widehat{u_{2i-1}},\cdots, u_{2n} )=(z_1,\cdots, z_{2n-1})$ where \[z_j=\begin{cases}
 u_{2i+2j-1} & \text{ if } 1\le j \le n-i \\
 u_{2i} +\sum_{j=1,j\neq i}^nu_{2j-1}u_{2j} & \text{ if } j=n-i+1 \\
 -u_{4n-2i+4-2j} & \text{ if } n-i+2\le j \le 2n-2i+1 \\
 u_{4n-2i+1-2j} & \text{ if } 2n-2i+2 \le j \le 2n-i \\
 u_{2(j-2n+i)} &\text{ if } 2n-i+1\le j\le 2n-1),
 \end{cases}\]

 \begin{align*}
 c_{2i}(u_1,\cdots,\widehat{u_{2i-1}},\cdots, u_{2n})=(u_{2i-2},u_{2i-4},\cdots , u_{2}, u_{1},u_3,\cdots u_{2i-3},-u_{2i+2},-u_{2i+4},\cdots ,\\ -u_{2n}, 
u_{2i-1} +\sum_{j=1,j\neq i}^nu_{2j-1}u_{2j}, u_{2n-1},u_{2n-3},
 \cdots, u_{2i+1} ).
 \end{align*}
\end{prop}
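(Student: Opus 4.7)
The plan is to verify two claims for each $c_l$: (i) that $c_l$ is a polynomial automorphism of affine spaces, where $X_\circ^{w_l}$ is identified with $\mathbb{A}^{2n-1}$ via the Bott--Samelson coordinates of Section 2.1, and (ii) that under $c_l$ the collection of non-empty Schubert divisors $\{X_{r_\alpha} \cap X_\circ^{w_l} : r_\alpha \le w_l\}$ is sent bijectively onto the collection of non-empty restricted divisors $\{D_i \cap U_l\}$. Since each stratification on its respective side is generated by the corresponding divisors (a stratum being the locus of points lying in a prescribed subset of divisors), property (ii) forces $c_l$ to send strata to strata.

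Property (i) is immediate from the form of each formula: in each $c_l$, all $z_j$ except one equal $\pm u_k$ for a distinct $k$, and the single exceptional coordinate ($z_n$ for $l \in \{1,2\}$, or $z_{n+1-i}$ for $l \in \{2i-1,2i\}$) has the shape $u_m + Q(u)$ with $Q$ a polynomial in the remaining $u$'s. The polynomial inverse is then written down by reading off the linear pieces and solving the exceptional equation for $u_m$.

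For (ii), substitute the formulas defining $c_l$ into the generators of $I(X_\circ^{w_l} \cap X_{r_\alpha})$ supplied by Propositions 1, 2, 3 and observe a telescoping cancellation. For instance in the $c_1$ case: the $\alpha=1$ generator $z_n$ becomes $u_2 + \sum_{k=1}^{n-1} u_{2k+1}u_{2k+2} = f_n|_{U_1}$; the $\alpha=0$ generator $z_n + \sum_k z_{n-k}z_{n+k}$ collapses to $u_2 = f_{1'}|_{U_1}$ as the cross terms cancel exactly against the quadratic part of $z_n$; and for $2 \le \alpha \le n-1$ one obtains $f_{n-\alpha+1}|_{U_1}$. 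The counts agree: $Q_1$ uses the $n$ simple reflections $\{0,1,\ldots,n-1\}$, and on $U_1$ the divisor $D_1$ is at infinity, leaving exactly $n$ non-empty divisors. Analogous substitutions using Propositions 2 and 3 dispatch $c_2$ and $c_{2i-1}$ for $2 \le i \le n$. For $c_{2i}$ one either re-runs the wiring-diagram computation for the reversed word $Q_{2i}$ directly, or deduces the result from $c_{2i-1}$ by noting that reversing a reduced word inverts the Bott--Samelson matrix product and then invoking the duality of Corollary 1 to translate minors into complementary minors.

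The main obstacle is purely combinatorial bookkeeping: across the four families of charts one must carefully establish the bijection between simple reflections $r_\alpha \le w_l$ and non-empty divisor indices $i$, and verify the telescoping cancellations (with attention to signs) uniformly in each case. No further geometric input beyond Propositions 1--3 is required.
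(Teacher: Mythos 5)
Your approach is essentially the paper's: substitute the $c_l$ formulas into the generators from the three ideal-computation propositions and watch the quadratic terms telescope down to the $f_i|_{U_l}$. The organizing framework you impose (show $c_l$ is an affine automorphism, exhibit a bijection of Schubert divisors with restricted anticanonical divisors, conclude strata correspond) is what the paper does implicitly, and your sample verification for $c_1$ matches the paper's computation exactly, including the observation that only the $n$ simple reflections appearing in $Q_1$ matter and that $D_1$ misses $U_1$.

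One detail in your treatment of $c_{2i}$ is off: reversing a reduced word does \emph{not} invert the Bott--Samelson product. What is true, and what the paper's argument silently uses, is that it \emph{transposes} it after reversing the variables: every factor $e_\alpha(z)\widetilde{r_\alpha}$ is a symmetric matrix, so $M_{Q_{2i}}(z_1,\ldots,z_{2n-1}) = M_{Q_{2i-1}}(z_{2n-1},\ldots,z_1)^T$. Since the Fulton generators are northwest minors and those are preserved under transpose, the generators for $Q_{2i}$ are exactly those for $Q_{2i-1}$ with the $z$'s reversed, which is precisely why $c_{2i}$ is taken to be $c_{2i-1}$ with $u$-coordinates reversed and $u_{2i}\leftrightarrow u_{2i-1}$. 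Corollary 1 (the complementary-minor duality for $M^{-T}$) is not needed here, and the inverse $M_Q^{-1}$ is not of the form $M_{Q'}$ for any word $Q'$, so that route does not lead anywhere. Your fallback of directly re-running the wiring diagram for $Q_{2i}$ is fine, but it discards the symmetry that makes the paper's $c_{2i}$ argument a one-line deduction.
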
 
Notice that for $2\le i\le n$, the chart map
 $c_{2i}$ is the map $c_{2i-1}$ with 
 $u_{2i}$ replaced with $u_{2i-1}$ and
 all coordinates reversed.
\begin{proof}
  We may now compute for each
  $w_l$ $(1\le l\le 2n)$ the pullback of the 
  generator of the principal ideal
  $X_{r_\alpha}\cap X_\circ^{w_l}$ via the chart map $c_l$
  for all $0\le \alpha\le n$.
  
  For $w_1=\prod Q_1$, $Q_1=[0(n-1)\searrow 1\nearrow (n-1)0]$,
  The relevant polynomials to pull back are
  those in Proposition 2. $z_n$ pulls back
  to $u_2+u_3u_4+\cdots u_{2n-1}u_{2n}$
  on $U_1$. When $2\le \alpha \le n$,
  $z_n+z_{n-1}z_{n+1}+\cdots z_{n-(\alpha-1)}z_{n+(\alpha-1)}$ 
  pulls back to $u_2+\sum_{i=1}^{n-\alpha} u_{2i+1}u_{2i+2}$. The polynomial for
  $\alpha=0$ is the same as that for $\alpha=n$.
  The case for $w_2$ is similar; 
  the polynomials obtained via pullbacks
  are the same except with all
  $u_2$ replaced by $u_1$.
  
    For $2\le i\le n$,
    $w_{2i-1}=\prod Q_{2i-1}$, $Q_{2i-1}=[(n+1-i)\searrow 1\nearrow (n-1)0n\searrow (n+2-i)]$,
    the relevant polynomials are as follows:
 \[
\begin{cases}
z_{n+1-i} & \text{if }\alpha = 1\\
z_{n+1-i}+z_{n-i}z_{n-i+2}+\cdots +z_{n-i-\alpha+2}z_{n-i+\alpha} & \text{if }2\le \alpha \le n-i+1 \\
\sum_{k=1}^{n+1-\alpha} z_{2n+1-i-k}z_{2n-i+k} &\text{if } n-i+2\le \alpha \le n \\
z_{2n-i} &\text{if }\alpha = 0
\end{cases} 
 \]
 $z_{n+1-i}$ pulls back to $ u_{2i} +\sum_{j=1,j\neq i}^n u_{2j-1}u_{2j} $. For $2\le \alpha \le n-i+1$,
 $z_{n+1-i}+z_{n-i}z_{n-i+2}+\cdots +z_{n-i-\alpha+2}z_{n-i+\alpha}$ pulls back to
 $ u_{2i} +\sum_{j=1,j\neq i}^{n-\alpha+1} u_{2j-1}u_{2j}$.
 For $n-i+2\le \alpha \le n$, $\sum_{k=1}^{n+1-\alpha} z_{2n+1-i-k}z_{2n-i+k}$ pulls back to
 $\sum_{j=1}^{n-\alpha+1} u_{2j-1}u_{2j}$. For 
 $\alpha=0$, $z_{2n-i}$ pulls back to $u_1$. 
 
For $w_{2i}=\prod Q_{2i}$, since $Q_{2i}$ is exactly the reverse
of $Q_{2i-1}$, and the chart map $c_{2i}$
is exactly $c_{2i-1}$ with the coordinates
reversed and $u_{2i}$ swapped for $u_{2i-1}$,
the polynomials pulled back are exactly the same
as those for $w_{2i-1}$ but with  $u_{2i}$ swapped for $u_{2i-1}$.
This proposition establishes our main theorem, Theorem 2
stated in Section 1.
\end{proof}

   \bibliographystyle{alpha}
\bibliography{ref}

 \end{document}